\documentclass{amsart}
\usepackage{amsmath, amssymb}
\usepackage{amsfonts}
\usepackage{amsthm, upref}
\usepackage{graphicx}
\usepackage{enumerate}
\usepackage[usenames, dvipsnames]{color}
\usepackage{mathtools}

\usepackage{ifthen}
\usepackage{tikz}
\usepackage{appendix}
\usepackage{verbatim}
\usetikzlibrary{decorations.pathmorphing}
\usetikzlibrary{calc}
\usepackage{hyperref}

\input xy
\xyoption{all}

\tikzset{vert/.style={circle,fill,inner sep=0,
    minimum size=0.15cm,draw}, new/.style={}}

\renewcommand{\comment}[1]{}
\newcommand{\eq}{\begin{equation}}
\newcommand{\en}{\end{equation}}
\newcommand{\rr}{\mathbb{R}}

\newcommand{\norm}[1]{\left\lVert #1 \right\rVert}
\newcommand{\abs}[1]{\left\lvert #1 \right\rvert}

\newcommand{\iprod}[1]{\left\langle #1 \right\rangle }

\newcommand{\wass}{\mathbb{W}}
\newcommand{\Ent}{\mathrm{Ent}}

\newcommand{\tq}{\widetilde{q}}
\newcommand{\tp}{\widetilde{p}}
\newcommand{\tf}{\widetilde{f}}
\newcommand{\tmu}{\widetilde{\mu}}
\newcommand{\tnu}{\widetilde{\nu}}
\newcommand{\tchi}{\widetilde{\chi}}

\newcommand{\cost}{\mathbf{C}}
\newcommand{\Diri}{\mathrm{Diri}}
\newcommand{\ldiv}{\mathbf{L}}
\newcommand{\tnabla}{\widetilde{\nabla}}

\newcommand{\wt}[1]{\widetilde{#1}}
\newcommand{\support}{\mathrm{spt}}
\newcommand{\tLambda}{\widetilde{\Lambda}}

\newcommand{\bpi}{\boldsymbol{\pi}}
\newcommand{\interior}{\mathrm{int}}

\DeclareMathOperator*{\esssup}{ess\,sup}
\DeclareMathOperator*{\essinf}{ess\,inf}

\begin{document}

\theoremstyle{plain}
\newtheorem{thm}{Theorem}
\newtheorem{lemma}[thm]{Lemma}
\newtheorem{prop}[thm]{Proposition}
\newtheorem{cor}[thm]{Corollary}

\theoremstyle{definition}
\newtheorem{defn}{Definition}
\newtheorem{asmp}{Assumption}
\newtheorem{notn}{Notation}
\newtheorem{prb}{Problem}

\theoremstyle{remark}
\newtheorem{rmk}{Remark}
\newtheorem{exm}{Example}
\newtheorem{clm}{Claim}

\title[Limit of entropic cost]{On the difference between entropic cost and the optimal transport cost}

\author{Soumik Pal}
\address{University of Washington, Seattle}
\email{soumikpal@gmail.com}

\keywords{Optimal transport, entropic cost, Schr\"{o}dinger problem, gradient flow, large deviations, Dirichlet distribution}

\subjclass[2000]{46N10, 60J25, 60F10, 94A17}

\thanks{Many thanks to Matthias Erbar, Martin Huesmann, Jonathan Niles-Weed and Leonard Wong for very useful conversations. This research is partially supported by NSF grant DMS-1612483}

\date{\today}

\begin{abstract} Consider the Monge-Kantorovich problem of transporting densities $\rho_0$ to $\rho_1$ on $\rr^d$ with a strictly convex cost function. A popular regularization of the problem is the one-parameter family called the entropic cost problem. The entropic cost $K_h$, $h>0$, is significantly faster to compute and $h K_h$ is known to converge to the optimal transport cost as $h$ goes to zero. We are interested the rate of convergence. We show that the difference between $K_h$ and $1/h$ times the optimal cost of transport has a pointwise limit when transporting a compactly supported density to another that satisfies a few other technical restrictions. This limit is the relative entropy of $\rho_1$ with respect to a Riemannian volume measure on $\rr^d$ that measures the local sensitivity of the transport map. For the quadratic Wasserstein transport, this relative entropy is exactly one half of the difference of entropies of $\rho_1$ and $\rho_0$. In that case we complement the results of Adams et al. and others \cite{ADPZ11, DLR13, EMR15} who use gamma convergence. More surprisingly, we demonstrate that this difference of two entropies (plus the cost) is also the limit for the Dirichlet transport introduced by Pal and Wong \cite{PW18}. The latter can be thought of as a multiplicative analog of the Wasserstein transport and corresponds to a non-local operator. It hints at an underlying gradient flow of entropy, in the sense of Jordan-Kinderlehrer-Otto, even when the cost function is not a metric. The proofs are based on Gaussian approximations to Schr\"odinger bridges as $h$ approaches zero. 
\end{abstract}

\maketitle

\section{Introduction.} Let $\rho_0$ and $\rho_1$ be Borel probability density functions on $\rr^d$. We will assume throughout this paper that these are continuous and compactly supported. Following a standard abuse of notation, we will make no notational distinction between absolutely continuous measures and their densities. 

Consider the Monge-Kantorovich problem of transporting $\rho_0$ to $\rho_1$ optimally with respect to a cost function $c(x,y)=g(x-y)$ where $g:\rr^d \rightarrow [0, \infty)$ is strictly convex,  $g(0)=0$, $g(x)>0$ for $x\neq 0$ and the Hessian of $g$ at the origin, $\nabla^2 g(0)$, is invertible. More formally, let $\Sigma= \rr^d \times \rr^d$ and let $M_1(\Sigma)$ denote the set of Borel probability measures on $\Sigma$ equipped with the L\'evy metric of weak convergence. We will throughout denote $(X,Y)$ as a pair of $d$-dimensional random variables with law  in $M_1(\Sigma)$. For $\nu \in M_1(\Sigma)$, define the two projections, $\pi^0(\nu)$ and $\pi^1(\nu)$, to be the marginal laws of $X$ and $Y$, where the pair $(X,Y)$ is distributed as $\nu$. Given a pair of densities $(\rho_0, \rho_1)$, let $\Pi(\rho_0, \rho_1)$ be the subset of $M_1(\Sigma)$ such that if $\nu\in \Pi(\rho_0, \rho_1)$ then $\pi^0(\nu)=\rho_0$ and $\pi^1(\nu)=\rho_1$. We call this the set of couplings of $(\rho_0, \rho_1)$ and will frequently denote by the notation $X \sim \rho_0$ and $Y \sim \rho_1$.

The Monge-Kantorovich optimal transport cost for the initial density $\rho_0$ and the target density $\rho_1$ with cost $c(x,y)=g(x-y)$ is given by
\eq\label{eq:MKOT}
\wass_g(\rho_0, \rho_1):=\inf_{\nu \in \Pi(\rho_0, \rho_1)} \nu\left( g(x-y)\right).
\en
Here and throughout, if $\nu$ is a measure and $f$ on $\Sigma$ is $\nu$-integrable, the integral will be denoted by $\nu(f)$.
In other words, we want the coupling $\nu$ of $(\rho_0, \rho_1)$ that minimizes the expected cost $\nu(g(x-y))$. The coupling that achieves this optimal cost is called the optimal coupling. We say that the optimal coupling solves the Monge problem if it is of the form $(X,T(X))$, where $X \sim \rho_0$ and $T:\rr^d \rightarrow \rr^d$ is a measurable map that push-forwards $\rho_0$ to $\rho_1$. 
The particularly well-known case is when $g(x-y)=\frac{1}{2}\norm{x-y}^2$, for which we will denote $\wass_g(\rho_0, \rho_1)$ by $\frac{1}{2}\wass_2^2(\rho_0, \rho_1)$.

For a probability density $\rho$ (or, its corresponding measure) define its entropy by $\Ent(\rho):= \int \rho(x) \log \rho(x) dx$. This definition is the negative of the usual Shannon differential entropy. Entropy of a measure that is not absolutely continuous is taken to be $\infty$. Recall the notion of relative entropy of two densities $\rho$ with respect to $\rho'$
\[
H(\rho \mid \rho')=\int \rho(y) \log \frac{\rho(y)}{\rho'(y)} dy. 
\]
The above is nonnegative if both $\rho, \rho'$ are probability densities, but not in general. The following entropic regularization of the transport problem \eqref{eq:MKOT} has become quite popular, especially in connection to efficient computational algorithms \cite{Cut13, PC19}. 

\begin{defn}[Entropic regularization]
Fix a parameter $h > 0$ and define the entropic regularization of optimally transporting $\rho_0$ to $\rho_1$ with parameter $h$ as 
\eq\label{eq:entropicprime}
K'_h\left( \rho_0, \rho_1 \right) := \inf_{\nu \in \Pi(\rho_0, \rho_1)}\left[ \frac{1}{h}\nu\left( g(x-y)\right) + \Ent(\nu) \right]. 
\en
\end{defn}

The quantity $K'_h$ is closely related to minimizing a relative entropy. Since $\rho_0$ and $\rho_1$ are compactly supported, without affecting the transport problem, we can assume that $g(z)=\infty$ outside a suitably large compact ball. Then consider the joint probability density on $\Sigma$, 
\eq\label{eq:whatismuh}
\mu_h(x,y)= \rho_0(x) \frac{1}{\Lambda_h} \exp\left( -\frac{1}{h} g(x-y)\right), 
\en
where $\Lambda_h$ is the normalizing function $\int_{\rr^d}  \exp\left( -\frac{1}{h} g(x-y)\right) dy=\int_{\rr^d} \exp\left( -\frac{1}{h} g(z)\right) dz$. It is easily verifiable that, if $\nu\in \Pi(\rho_0, \rho_1)$, then  
\eq\label{eq:relaxcost}
H\left( \nu \mid \mu_h \right)= \frac{1}{h} \nu\left( g(x-y)\right) + \Ent(\nu) - \Ent(\rho_0) + \log \Lambda_h.  
\en

\begin{defn}[Entropic cost]
Fix a parameter $h > 0$ and define the entropic cost of optimally transporting $\rho_0$ to $\rho_1$ with parameter $h$ as 
\eq\label{eq:entropic}
K_h\left( \rho_0, \rho_1 \right) := \inf_{\nu \in \Pi(\rho_0, \rho_1)}H\left( \nu \mid \mu_h\right). 
\en
\end{defn}

Since the last two terms in \eqref{eq:relaxcost} do not depend on the coupling, 
\eq\label{eq:khkhp}
K_h(\rho_0, \rho_1) = K'_h(\rho_0, \rho_1) - \Ent(\rho_0) + \log \Lambda_h,
\en
while the minimizers of both $K_h$ and $K'_h$ are the same. 

As $h \rightarrow 0+$ one expects that the minimizer of the entropic cost problem will ``converge'' to the minimizer of \eqref{eq:MKOT}. Such results are known under regularity assumptions (see  \cite[Theorem 3.3]{L12}) and are made precise in terms of \textit{gamma convergence}. In passing, let us mention that the entropic regularization is frequently written in terms a different parameter $\lambda={1}/{h}$: $$h K'_h(\rho_0, \rho_1)=\inf_{\nu \in \Pi(\rho_0, \rho_1)}\left[ \nu\left( g(x-y)\right) + \frac{1}{\lambda}\Ent(\nu) \right].$$
Of course, this does not affect the minimizers. 

In this paper we are interested in the limit $\lim_{h\rightarrow 0+} \left[ K_h(\rho_0, \rho_1) - \frac{1}{h} \wass_g(\rho_0, \rho_1)\right]$.
Since the entropic cost is easier to compute, it is natural to ask for error bounds from the optimal cost. 

In the case of the quadratic Wasserstein cost, it is now a famous result (see \cite{ADPZ11} for dimension one and \cite{DLR13, EMR15} for higher dimensions) that 
\[
\lim_{h\rightarrow 0+} \left[ K_h(\rho_0, \rho_1) - \frac{1}{2h} \wass^2_2(\rho_0, \rho_1)\right]= \frac{1}{2}\left(\Ent(\rho_1) - \Ent(\rho_0) \right),
\]
where the above limit holds in the sense of gamma convergence. Whether such a limit holds pointwise for a given pair of $(\rho_0, \rho_1)$ was left open, and is resolved here as a corollary of our more general result that covers many other convex cost functions and more. To state our result we need a definition. 

It is proved in \cite{GM96} that for a cost function as above the optimal coupling is also the solution of the Monge problem, and, in fact, the following description of the optimal solution exists. For $c(x,y)=g(x-y)$ as above, define the class of $c$-concave functions as a function $\psi:\rr^d \rightarrow \rr\cup\{-\infty\}$ for which there exists a function $\xi:\rr^d \rightarrow \rr\cup \{-\infty\}$ such that $\psi(x) =\inf_{y\in \rr^d} \left[ g(x-y) - \xi(y)\right]$.

It has been shown in \cite{GM96} that, except on a set of dimension $(d-1)$, a $c$-concave function is differentiable, wherever it is finite, and it is twice differentiable almost everywhere in the sense of Alexandrov. Theorem 1.2 from \cite{GM96} shows that there is a $c$-concave function $\psi$ on $\rr^d$ such that the map 
\eq\label{eq:whatisxstar}
T(x)=x^*:= x- \left( \nabla g\right)^{-1} \circ \nabla \psi
\en
push-forwards $\rho_0$ to $\rho_1$. Additionally, the coupling $(X, X^*)$, $X \sim \rho_0$, achieves the infimum in \eqref{eq:MKOT}. This solution is unique, almost surely, on $\support(\rho_0)$, the support of $\rho_0$.
When $g(z)=\frac{1}{2}\norm{z}^2$, $\nabla g$ is the identity map and $\psi$ is $c$-concave is equivalent to the assertion that $\frac{1}{2}\norm{x}^2- \psi(x)$ is convex and lower semicontinuous in the usual sense. Thus $x\mapsto x^*$ is the well-known Brenier map.

Corresponding to every $g$ and $\psi$ there is a related concept of divergence. This was originally defined in \cite{PW18} in more generality. We restate the definition here limiting ourselves to this special case of $c(x,y)=g(x-y)$. The following definitions are standard and can be found in \cite[Chapter 1]{AGS08}. 

Distinguish between two copies of $\rr^d$ depending on whether the $x$ or the $y$ coordinate belongs there. This is required due to the possible asymmetry of $g(x-y)$. Let $\psi$ denote a $c$-concave function of $x$. One can define (see \cite[Definition 2.6]{GM96}) the $c$-superdifferential $\partial^c\psi$ to be the set of pairs $(x,y)\in \Sigma$ for which $\psi(v) \le \psi(x) + g(v-y) - g(x-y)$, for all $v\in \rr^d$. 
Given $x$, the set of all $y$ such that $(x,y)\in \partial^c \psi$ will be denoted by $\partial^c\psi(x)$. Define the c-dual $\psi^*$ as $\psi^*(y)= \inf_{x\in \rr^d} \left[ g(x-y) - \psi(x)\right]$. 
It turns out that 
\eq\label{eq:dualineq}
\psi(x) + \psi^*(y) \le c(x,y)=g(x-y),
\en 
with equality precisely when $(x,y)\in \partial^c\psi$. Finally from \cite[Proposition 3.4]{GM96}, we get that for $g$ satisfying our assumptions,  $\partial^c\psi(x)=\{x^*\}$ whenever $\psi$ is differentiable at $x$, which happens Lebesgue almost everywhere on the domain of $\psi$. 

We are going to assume the following regularity conditions. For any $w\in \rr^d$ and $0 < \theta < \pi$, define the cone $K(w,\theta)$ with direction $w$ and aperture $\theta$ by 
\[
K(w, \theta) =\{x \in \rr^d:\; \iprod{x, w} \ge \norm{x} \norm{w} \cos(\theta)\}.
\]

\begin{asmp}\label{asmp:density}
Assume that $\rho_0, \rho_1$ are continuous and compactly supported and $\rho_1$ is bounded away from zero on its support. Further assume that there is angle $\theta > 0$ and a constant $\delta > 0$ such that, for any $y \in \support(\rho_1)$, there exists $w \neq 0$ (depending on $y$), such that the shifted cone $y + K(w, \theta)$ intersection with $B_\delta(y)$ lies completely inside $\support(\rho_1)$. Here $B_\delta(y)$ is the open Euclidean ball of radius $\delta$ at $y$. Finally, assume that the Lebesgue measure of the boundary of $\support(\rho_1)$ is $0$.
\end{asmp}

The set $T(\support(\rho_0))$ is a set where $\rho_1$ is concentrated and the map $x \mapsto x^*$ is invertible almost everywhere. 
Let $\interior\left(\support(\rho_1)\right)$ denote the interior of the support of $\rho_1$. We will denote the set $T(\support(\rho_0))\cap \interior\left(\support(\rho_1)\right)$ by $\support_0(\rho_1)$. By Assumption \ref{asmp:density}, $\rho_1$ is concentrated on this set.  

\begin{defn}[c-divergence]\label{defn:cdiv}
For a $c$ concave function $\psi$, the divergence is a function from $\Sigma=\rr^d \times \rr^d$ to $[0, \infty]$. If $y \in \support(\rho_1)$ and $z=x^*$, then, define 
\[
D[y \mid z ]=D[y \mid x^*]:= g(x-y) - \psi(x) - \psi^*(y).
\]
For any other $y,z$, let $D[y \mid z]=\infty$. Equivalently, we can redefine $\psi(x)=-\infty$, for $x\notin \support(\rho_0)$ and $\psi^c(y)=-\infty$, for $y \notin \support(\rho_1)$.  
\end{defn}

Hence, divergence measures the so-called ``slackness'' in Linear Programing. A transport interpretation of divergence is provided in \cite{PW16} where it is shown to be a measure of the error in transporting $x$ to $y$ instead of the optimal $x$ to $x^*$. See a more geometric analysis in \cite{W17}. For the quadratic cost, this notion of divergence coincides with the well-known Bregman divergence and is a fundamental quantity in Information Geometry \cite{A16}.

Clearly $D[y\mid x^*]\ge 0$ by \eqref{eq:dualineq} and, if $\partial^c\psi(x)=\{x^*\}$, then $D[y\mid x^*]$ is exactly zero if and only if $y=x^*$. It follows that $D[\cdot \mid x^*]$ must be locally quadratic in a neighborhood of such an $x^*$. 

\begin{asmp}\label{asmp:continuity} Assume that for any $\delta >0$, 
\eq\label{eq:divsublev}
\inf_{z\in \support(\rho_1)} \inf_{y \in B^c_\delta(z)} D[y \mid z] >0,
\en   
where $B^c_\delta(z)$ is the complement of the open Euclidean ball around $z$ of radius $\delta$. Assume also that the following quadratic approximation to the divergence holds uniformly on $\support_0(\rho_1)$. For any $\epsilon >0$, there is a $\delta>0$ such that for all $z \in \support_0(\rho_1)$ and all $y\in \support(\rho_1)$ such that $\norm{y-z}< \delta$, we have  
\eq\label{eq:remain1}
R[y\mid z] := D[y\mid z]- \frac{1}{2}(y-z)^T A(z) (y-z) = \epsilon \abs{y-z}^2,
\en
for some family of matrices $A(z)$ that is continuous a.s. on $\support_0(\rho_1)$. The family $A(\cdot)$ is assumed to be uniformly elliptic in the sense that there exists an $\varepsilon>0$ such that the eigenvalues of $A(z)$ lie in $(\varepsilon, 1/\varepsilon)$ for a.e. $z \in \support_0(\rho_1)$. In particular, the family of inverses $A^{-1}(z)$, $z\in \support_0(\rho_1)$, exists and is continuous almost surely. \footnote{For the quadratic Wasserstein transport the assumption is asking for continuity and uniform ellipticity of the Hessian matrix of the dual Kantorovich potential function a.e..}

We also assume that a similar Taylor approximation holds for the convex function $g$ at the origin:
\eq\label{eq:remain2}
r(z):=g(z) - \frac{1}{2} z^T \nabla^2g(0) z = o\left( \abs{z}^2\right), \quad \text{as} \; z \rightarrow 0, 
\en
where the matrix $\nabla^2 g(0)$ is assumed to be invertible. 
\end{asmp}

Consider the inverse matrix $A^{-1}(z)$ as a Riemannian matrix derived from the divergence.
Define the Riemmanian volume measure on $\rr^d$ by 
\eq\label{eq:riemannvol}
\frac{d\mu_g(y)}{dy}= \rho_1(y) \sqrt{\det(A^{-1}(y))}, \quad \text{a.e.}\; y \in \rr^d,
\en
then the relative entropy of $\rho_1$ with respect to $\mu_g$ is given by $H(\rho_1 \mid \mu_g)=\frac{1}{2} \int \rho_1(y) \log \det\left( A(y)\right) dy$.

\begin{thm}\label{thm:mainthmw} Suppose Assumptions \ref{asmp:density} and \ref{asmp:continuity} are satisfied. Then, the following convergences hold.
\eq\label{eq:entasymp}
\begin{split}
\lim_{h\rightarrow 0+}& \left( K_h(\rho_0, \rho_1) - \frac{1}{h} \wass_g(\rho_0, \rho_1) \right) = -\frac{1}{2} \log \det \nabla^2 g(0) + H(\rho_1 \mid \mu_g).\\
\lim_{h\rightarrow 0+}& \left( K'_h(\rho_0, \rho_1) - \frac{1}{h} \wass_g(\rho_0, \rho_1) + \frac{d}{2}\log(2\pi h) \right) = \Ent(\rho_0) + H(\rho_1 \mid \mu_g).
\end{split}
\en 
In particular, for the quadratic Wasserstein cost $g(x-y)=\frac{1}{2}\norm{x-y}^2$, 
\eq\label{eq:entasympw}
\begin{split}
\lim_{h\rightarrow 0+} &\left( K_h(\rho_0, \rho_1) - \frac{1}{2h} \wass^2_2(\rho_0, \rho_1) \right) = \frac{1}{2}\left( \Ent(\rho_1) - \Ent(\rho_0) \right).\\
\lim_{h\rightarrow 0+} & \left( K'_h(\rho_0, \rho_1) - \frac{1}{2h} \wass^2_2(\rho_0, \rho_1) + \frac{d}{2}\log(2\pi h)\right) = \frac{1}{2}\left( \Ent(\rho_1) + \Ent(\rho_0) \right).
\end{split}
\en
\end{thm}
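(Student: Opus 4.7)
The guiding heuristic is that, as $h\to 0+$, the Schr\"odinger bridge $\pi_h$ minimizing $K_h$ concentrates around the graph of the Monge map $T$, and the conditional laws $\pi_h(dy\mid x)$ behave like Gaussians with mean $x^*$ and covariance $h A^{-1}(x^*)$, thanks to the quadratic approximation \eqref{eq:remain1}. Every asymptotic in the theorem will then be extracted by Laplace's method applied to such Gaussian profiles.

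\textbf{Step 1 (Laplace for $\Lambda_h$).} Applying Laplace's method to $\Lambda_h=\int e^{-g(z)/h}dz$ via \eqref{eq:remain2}, together with strict convexity of $g$ away from the origin, gives
\[
\log \Lambda_h = \tfrac{d}{2}\log(2\pi h) - \tfrac{1}{2}\log\det \nabla^2 g(0) + o(1).
\]
Combined with \eqref{eq:khkhp}, the two asymptotics in \eqref{eq:entasymp} are equivalent, so I focus on the statement for $K_h$.

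\textbf{Step 2 (Upper bound via an explicit coupling).} I build $\nu_h\in\Pi(\rho_0,\rho_1)$ starting from the unadjusted conditional
\[
q_h(y\mid x) \propto \rho_1(y)\exp\bigl(-D[y\mid x^*]/h\bigr),
\]
and then correct to exact marginals by a Sinkhorn-type or block-approximation step whose contribution to $H(\nu_h\mid\mu_h)$ is $o(1)$. Assumption \ref{asmp:density} (interior cone condition plus positivity of $\rho_1$ on its support) forces the Gaussian mass around $x^*$ of radius $O(\sqrt{h\log(1/h)})$ to sit inside $\support(\rho_1)$, while uniform ellipticity of $A(\cdot)$ makes the Laplace asymptotics uniform in $x$. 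Decomposing $g(x-y)=\psi(x)+\psi^*(y)+D[y\mid x^*]$ and using Kantorovich duality $\wass_g(\rho_0,\rho_1)=\rho_0(\psi)+\rho_1(\psi^*)$, I get
\[
\tfrac{1}{h}\,\nu_h\bigl(g(x-y)\bigr) = \tfrac{1}{h}\wass_g(\rho_0,\rho_1) + \tfrac{d}{2} + o(1),
\]
while the Gaussian entropy formula and the change of variables $y=x^*$ yield
\[
\Ent(\nu_h) = \Ent(\rho_0) - \tfrac{d}{2}\log(2\pi e h) + H(\rho_1\mid \mu_g) + o(1).
\]
Substituting into \eqref{eq:relaxcost} and combining with Step 1 gives the upper bound in \eqref{eq:entasymp}.

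\textbf{Step 3 (Lower bound and quadratic specialization).} For the matching lower bound I would use the dual variational form of $K_h$ with test potentials $(a_h,b_h)=(-\psi/h+\alpha,\,-\psi^*/h+\beta)$, where the $O(1)$ corrections $\alpha,\beta$ are chosen so that the inner log-partition term is controlled by the same Laplace asymptotics used above; equivalently, work directly with the Schr\"odinger factorization $\pi_h(x,y)\propto \phi_h(x)\psi_h(y)e^{-g(x-y)/h}$ and show, by a compactness/fixed-point argument, that $h\log \phi_h\to \psi$ and $h\log\psi_h\to\psi^*$ uniformly on compacts, using \eqref{eq:divsublev} to rule out mass escaping far from the graph of $T$. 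The hard step, and the real content of the theorem, is making this uniform Gaussian approximation of $\pi_h(\cdot\mid x)$ valid up to the boundaries of the supports; the cone condition in Assumption \ref{asmp:density} and the uniform ellipticity in Assumption \ref{asmp:continuity} are precisely what is needed to handle boundary effects. Finally, for $g(z)=\tfrac{1}{2}\|z\|^2$ we have $\nabla^2 g(0)=I$, and writing $T=\nabla\phi$ for the Brenier map one computes $A(x^*)=(\nabla T)^{-1}(x)=\nabla^2\phi^*(x^*)$; the Monge--Amp\`ere identity $\rho_0(x)=\rho_1(T(x))\det\nabla T(x)$ then gives $\det A(x^*)=\rho_1(x^*)/\rho_0(x)$, and a change of variables reduces $H(\rho_1\mid\mu_g)=\tfrac{1}{2}\int \rho_1\log\det A\,dy$ to $\tfrac{1}{2}(\Ent(\rho_1)-\Ent(\rho_0))$, yielding \eqref{eq:entasympw}.
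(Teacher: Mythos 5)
Your overall strategy coincides with the paper's: Laplace asymptotics for $\Lambda_h$, the decomposition $g(x-y)=\psi(x)+\psi^*(y)+D[y\mid x^*]$, Gaussian approximation of the tilted kernel via \eqref{eq:remain1}, and the Monge--Amp\`ere computation $\det A(x^*)=\rho_1(x^*)/\rho_0(x)$ for the quadratic case. Your lower bound is also essentially the paper's: choosing the dual potentials $(\psi/h-\log Z_h,\ \psi^*/h)$ in the Donsker--Varadhan/duality inequality is exactly the exponential tilting identity $H(\nu\mid\mu_h)=\tfrac1h\wass_g(\rho_0,\rho_1)-\int\log Z_h\,\rho_0+H(\nu\mid\tmu_h)\ge \tfrac1h\wass_g(\rho_0,\rho_1)-\int\log Z_h\,\rho_0$, so you do not need (and should not invoke) the much harder convergence $h\log\phi_h\to\psi$ of the Schr\"odinger potentials, which is nowhere proved in your sketch.

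The genuine gap is in Step 2. The kernel $q_h(y\mid x)\propto \rho_1(y)e^{-D[y\mid x^*]/h}$ does \emph{not} have second marginal $\rho_1$; its marginal is only shown to converge pointwise to $\rho_1$ (this is part (iv) of the paper's Lemma \ref{lem:partitionfnasymp}). Your entire upper bound rests on the assertion that a ``Sinkhorn-type or block-approximation step'' restores the exact marginal at an $o(1)$ cost in relative entropy, but that assertion is precisely the hard technical content of the theorem and you give no argument for it; in general, a multiplicative reweighting that fixes the $y$-marginal changes $\Ent(\nu_h)$ and $\nu_h(g)/h$ by amounts that are not obviously $o(1)$, since the density ratio $\rho_1/\rho_1^h$ is only controlled up to constants (cf.\ \eqref{eq:densityratiobound}), and an $O(1)$ pointwise error in the density can produce an $O(1)$ error in $H(\cdot\mid\mu_h)$. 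The paper avoids this by a different device: it builds the admissible coupling $f_{2h}$ as the two-step composition $x\to y\to z$, where $y\mid x\sim\tp_h(x,\cdot)$ and $z\mid y\sim\tchi_h(y,\cdot)\propto \rho_1(z)\,\tLambda_h(z_*)^{-1}e^{-D[y\mid z]/h}$ is the reversed kernel reweighted so that the $z$-marginal is \emph{exactly} $\rho_1$ by construction; it then shows $H(f_{2h}\mid\tmu_{2h})\to 0$ by a Gaussian convolution identity ($N(0,hA^{-1})*N(0,hA^{-1})=N(0,2hA^{-1})$) together with the uniform two-sided bounds from Lemma \ref{lem:partitionfnasymp} and the exponential smallness away from the graph of $T$ guaranteed by \eqref{eq:divsublev}. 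Without either this composition trick or a quantitative control of your marginal-correction step, the upper bound is not established.
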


The proof of this theorem follows from a Gaussian approximation which is best stated in the quadratic Wasserstein case. Consider Brownian motion $\{X_t,\; 0\le t \le h\}$ in $\rr^d$ ``conditioned'' to have initial distribution $\rho_0$ and terminal distribution $\rho_1$ at time $h$. Such a process is called a Schr\"odinger bridge between $\rho_0$ and $\rho_1$. See \cite{leonardsurvey} and \cite{conforti2018}. It is intuitive that the law of the vector $(X_0, X_h)$ is the minimizer of the entropic cost $K_h$. Hence, if we can exactly describe its joint density, we can compute $K_h$. Exactly describing such a bridge is very hard although can be done abstractly as an $(f,g)$ transform. See \cite[Section 3]{leonardsurvey}. The core of our proof is to show that, for $h \approx 0$, the bridge is approximately the following. Sample $X_0$ from $\rho_0$. Given $X_0=x$, sample $X_h$ from a Gaussian distribution with mean $x^*$ and covariance $h A^{-1}(x^*)$. Of course, the distribution of $X_h$, say $\rho_1^h$, obtained this way is not exactly $\rho_1$. However, as $h \rightarrow 0+$, the approximation is tight. This Gaussian approximation also gives an interpretation of the matrix $A^{-1}(x^*)$. If $A^{-1}(x^*)$ is large, one can perturb $X_h$ more, relatively speaking, around $x^*$ without paying too much transportation cost. However, if $A^{-1}(x^*)$ is small, $X_h$ has a relatively smaller fluctuation around $x^*$. Hence the Riemannian volume measure $\mu_g$ can be thought of as a measure of sensitivity of the Monge map.

\subsection{The Dirichlet transport}\label{sec:dirichletintro} The quadratic Wasserstein case \eqref{eq:entasympw} is very special since the limit is the difference of the two entropies. See the next section for its connection to heat equation and gradient flow of entropy. Rather surprisingly, a similar limit appears for a completely different transport cost called the Dirichlet transport problem that can be thought of as a multiplicative analog of the Wasserstein transport \cite{PW18}. Let $n \geq 2$ be an integer and consider the open simplex 
\[
\Delta_n=\left\{ (p_1, p_2, \ldots, p_n):\; p_i > 0,\; \forall\; i, \; p_1 + \ldots + p_n=1\right\}.
\]
The unit simplex has an abelian group structure with a multiplicative group operation
\begin{equation} \label{eqn:simplex.group.operation}
p \odot q := \left( \frac{p_iq_i}{\sum_{j = 1}^n p_jq_j} \right)_{1 \leq i \leq n}, \quad p, q \in \Delta_n, \quad p^{-1} := \left( \frac{1/p_i}{\sum_{j = 1}^n 1/p_j}\right)_{1 \leq i \leq n}. 
\end{equation}
The identity element is the barycenter $\overline{e} := \left( \frac{1}{n}, \ldots, \frac{1}{n}\right)$.
The $\odot$ operation plays an analogous role as vector addition in the Wasserstein transport. In particular, the sign changes that often appear in standard optimal transport definitions will be replaced here by inversion $p \mapsto p^{-1}$ which takes a little bit of time to get used to.

Let $c: \Delta_n \times \Delta_n \rightarrow [0, \infty)$ be the cost function defined (see \cite[Lemma 6]{PW18})
\eq\label{eq:costDirichlet}
c(p,q)= \log\left(\frac{1}{n} \sum_{i=1}^n \frac{q_i}{p_i} \right) - \frac{1}{n} \sum_{i=1}^n \log\frac{q_i}{p_i}=H\left( \overline{e} \mid q\odot p^{-1}\right), 
\en
where $H$ is the discrete relative entropy defined on $\Delta_n \times \Delta_n$ by $H\left(p \mid q\right) := \sum_{i = 1}^n p_i \log \frac{p_i}{q_i}$. Clearly, $c(p, q) \geq 0$ for all $p, q$, and $c(p, q) = 0$ only if $p = q$. It is clear that the cost function is not symmetric in $p$ and $q$, although $c(p,q)=c(q^{-1}, p^{-1})$.

The variable $\pi:=q\odot p^{-1}$ will play an important role throughout this paper. Following our previous works \cite{PW14, PW16, P17} we call $\pi$ the {\it portfolio vector}. Note that $p = q$ (i.e., $c(p, q) = 0$) if and only if the portfolio vector $\pi$ is equal to the barycenter $\overline{e}$. Since $H(p \mid q)$ is a convex function of $q$ for fixed $p$, our cost function $H(\overline{e}\mid q\odot p^{-1})$ is similar to the cost $g(y-x)$, for $x,y\in \rr^d$, where the group operation $+$ on $\rr^d$ has been replaced by $\odot$. It is also possible by a change of coordinates (see Section \ref{sec:Dirichlet}) to reduce this cost to $g(x-y)$ for a convex $g$. However, the group structure $\odot$, which is critical towards understanding this transport, is lost in the process.

As before, given densities $\rho_0, \rho_1$, compactly supported on $\Delta_n$, consider the cost $\cost(\rho_0, \rho_1)$ of transporting $\rho_0$ to $\rho_1$ with respect to cost $c$. It is clear that $\cost(\rho_0, \rho_1)$ is not a metric since it is asymmetric in $\rho_0$ and $\rho_1$. The regularity theory for this transport has been recently studied in \cite{KZ19} where it has been shown that this cost function satisfies the Ma-Trudinger-Wang condition \cite{MTW}. Hence the transport map is smooth when $\support(\rho_1)$ is convex.

The name Dirichlet transport comes from its deep connection to the Dirichlet distribution as shown in \cite{PW18}. For any $\lambda > 0$, consider the symmetric Dirichlet distribution $\Diri(\lambda)$ with parameters $\left( \lambda/n, \ldots, \lambda/n\right)$ on $\Delta_n$ with density 
\eq\label{eq:diridensity}
\frac{\Gamma(\lambda)}{\left(\Gamma(\lambda/n)\right)^n} \prod_{i=1}^n p_i^{\lambda/n -1}, \quad p_n:= 1-\sum_{i=1}^{n-1} p_i.
\en
The density is with respect to the $(n-1)$ dimensional Lebesgue measure on the open set $\{ (p_1, p_2, \ldots, p_{n-1}),\; p_i > 0,\; \sum_{i=1}^{n-1} p_i < 1  \}$. 

To calculate entropy consider the Haar measure on $\Delta_n$ with respect to the group operation $\odot$. This measure is the sigma-finite Dirichlet measure $\Diri(0)$ with a density $\nu_0(p)=\prod_{i=1}^n p_i^{-1}$. For any probability density $\rho$ on $\Delta_n$, define
\[
\Ent_0(\rho) := \rho \left( \log \frac{\rho}{\nu_0} \right).
\]
Let $\mu_h$ denote the joint density on $\Delta_n \times \Delta_n$ where we sample $X \sim \rho_0$ and, given $X=p$, $Y$ has the distribution of $p \odot G_h$, for an independent Dirichlet$(1/h)$ random variable $G_h$. Define the entropic cost of coupling $(\rho_0, \rho_1)$ with parameter $h$ as
\eq\label{eq:entropicdirichlet}
K_h(\rho_0, \rho_1) = \inf_{\nu \in \Pi(\rho_0, \rho_1)}H\left( \nu \mid \mu_h\right). 
\en
Given $\rho_0, \rho_1$, the Monge solution exists and is unique and there is a corresponding notion of divergence. The details can be found in \cite{PW18} (see Section \ref{sec:Dirichlet}).

\begin{thm}\label{thm:mainthm2}
Suppose that Assumption \ref{asmp:densityc} and \ref{asmp:continuityc} below (similar to Assumption \ref{asmp:density} and \ref{asmp:continuity}) are satisfied. Then,
\[
\lim_{h\rightarrow 0+} \left( K_h(\rho_0, \rho_1) - \left(\frac{1}{h} - \frac{n}{2} \right) \cost(\rho_0, \rho_1) \right) = \frac{1}{2}\left( \Ent_0(\rho_1) - \Ent_0(\rho_0) \right).
\]
\end{thm}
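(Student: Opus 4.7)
The plan is to reduce to Theorem~\ref{thm:mainthmw} via the natural logarithmic chart on the simplex. First I would define $\phi\colon\Delta_n\to\rr^{n-1}$ by $\phi(p)=(\log(p_i/p_n))_{i=1}^{n-1}$. Direct computation gives $\phi(p\odot q)=\phi(p)+\phi(q)$, $\phi(p^{-1})=-\phi(p)$, and $\phi$ pushes the Haar measure $\nu_0(p)\,dp$ forward to Lebesgue measure on $\rr^{n-1}$. Writing $\widetilde\rho_i$ for the push-forward of $\rho_i$ under $\phi$, this yields $\Ent(\widetilde\rho_i)=\Ent_0(\rho_i)$, and the cost takes the Euclidean form $c(p,q)=g(\phi(q)-\phi(p))$, where
\[g(z)=H(\overline e\mid\phi^{-1}(z))=\log\!\Bigl(\tfrac{1}{n}+\tfrac{1}{n}\!\textstyle\sum_{i=1}^{n-1}e^{z_i}\Bigr)-\tfrac{1}{n}\!\textstyle\sum_{i=1}^{n-1}z_i\]
is strictly convex with $g(0)=0$ and invertible $\nabla^2 g(0)$. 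Since the $\Diri(1/h)$ density relative to $\nu_0$ is a normalizing constant times $\exp(-\tfrac{1}{h} H(\overline e\mid\cdot))$, the push-forward of $\mu_h$ under $(\phi,\phi)$ is exactly the Euclidean reference measure \eqref{eq:whatismuh} for cost $g$ and first marginal $\widetilde\rho_0$. By coordinate-invariance of relative entropy and the bijection between couplings, $K_h(\rho_0,\rho_1)$ and $\cost(\rho_0,\rho_1)$ equal their Euclidean analogues for $(\widetilde\rho_0,\widetilde\rho_1,g)$.

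Granted that Assumptions~\ref{asmp:densityc}--\ref{asmp:continuityc} on $(\Delta_n,c)$ imply Assumptions~\ref{asmp:density}--\ref{asmp:continuity} for $(\widetilde\rho_0,\widetilde\rho_1,g)$ on $\rr^{n-1}$, Theorem~\ref{thm:mainthmw} will give
\eq\label{eq:planthm2}
\lim_{h\to 0+}\!\bigl[K_h(\rho_0,\rho_1)-\tfrac{1}{h}\cost(\rho_0,\rho_1)\bigr]=-\tfrac{1}{2}\log\det\nabla^2 g(0)+H(\widetilde\rho_1\mid\mu_g).
\en
To identify this limit, I would write $\nabla^2 g(z)=\mathrm{diag}(r)-rr^T$, where $r\in\rr^{n-1}$ lists the first $n-1$ barycentric coordinates of $\phi^{-1}(z)$, and apply the matrix determinant lemma to obtain $\det\nabla^2 g(z)=\prod_{i=1}^n r_i(z)$. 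Combined with $g(z)=-\log n-\tfrac{1}{n}\sum_{i=1}^n\log r_i(z)$, this gives the key algebraic identity
\eq\label{eq:planDirikey}
\log\det\nabla^2 g(z)=-n\log n-n\,g(z),
\en
so in particular $-\tfrac{1}{2}\log\det\nabla^2 g(0)=\tfrac{n}{2}\log n$. Next, differentiating the optimality relation $\nabla\psi^*(T(x))=-\nabla g(x-T(x))$ along the Monge map $T$ produces the factorization $A(T(x))=\nabla^2 g(x-T(x))(\nabla T(x))^{-1}$; taking $\log\det$, applying Monge--Amp\`ere to write $\log\det\nabla T(x)=\log\widetilde\rho_0(x)-\log\widetilde\rho_1(T(x))$, substituting \eqref{eq:planDirikey}, and pushing forward under $T$ yields
\[H(\widetilde\rho_1\mid\mu_g)=\tfrac{1}{2}\!\bigl[-n\log n-n\,\cost(\rho_0,\rho_1)+\Ent_0(\rho_1)-\Ent_0(\rho_0)\bigr].\]
The two $\tfrac{n}{2}\log n$ contributions in \eqref{eq:planthm2} then cancel and rearrangement gives the stated limit.

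The main technical obstacle will be verifying that Assumptions~\ref{asmp:densityc} and \ref{asmp:continuityc} translate to Assumptions~\ref{asmp:density} and \ref{asmp:continuity} for the reduced Euclidean problem. The support and cone conditions on $\widetilde\rho_1$ should transfer via the smoothness and uniformly bounded Jacobian of $\phi^{\pm 1}$ on the compact supports. The sublevel bound and quadratic approximation of the divergence should transfer because the pullback $\widetilde\psi:=\psi\circ\phi^{-1}$ is $g$-concave and its Euclidean divergence coincides with the Dirichlet divergence in the original coordinates. Uniform ellipticity of the Euclidean $A(\cdot)$ on $\support_0(\widetilde\rho_1)$ should follow from the factorization $A=\nabla^2 g\,(\nabla T)^{-1}$, with $\nabla^2 g$ bounded and uniformly positive on the compact set and $\nabla T$ inheriting uniform ellipticity from the Dirichlet assumption.
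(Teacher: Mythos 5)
Your proposal is correct in outline but takes a genuinely different route from the paper. The paper proves Theorem \ref{thm:mainthm2} by redoing the whole tilting argument natively on the simplex: it builds the exponential tilting $\tf_h$ of the Dirichlet kernel $F_{1/h}$, proves a simplex analogue of Lemma \ref{lem:partitionfnasymp} (Lemma \ref{lem:exptilting2}), and identifies $\lim_{h\to 0+}-\int\log Z_h\,\rho_0$ by importing the entropy identity \eqref{eq:diffent0} from \cite[Theorem 16]{PW18}. You instead reduce to Theorem \ref{thm:mainthmw} via the exponential chart $\phi(p)=(\log(p_i/p_n))_i$ — a reduction the paper explicitly notes is available but declines to pursue on the grounds that the resulting limit "is not illuminating" — and then make the limit illuminating by hand. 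Your two algebraic ingredients check out: $\nabla^2 g(z)=\mathrm{diag}(r)-rr^T$ with $\det\nabla^2 g(z)=\prod_{i=1}^n r_i(z)$ by the matrix determinant lemma, hence $\log\det\nabla^2 g(z)=-n\log n-n\,g(z)$; and $A(T(x))=\nabla^2 g\,(\nabla T(x))^{-1}$ combined with Monge--Amp\`ere turns $H(\tilde\rho_1\mid\mu_g)$ into $\tfrac12[-n\log n-n\,\cost+\Ent_0(\rho_1)-\Ent_0(\rho_0)]$, after which the $\tfrac n2\log n$ terms cancel exactly as you say. In effect you rederive, from scratch and in Euclidean coordinates, the same entropy bookkeeping that the paper outsources to \cite{PW18}; what your route buys is self-containedness and a conceptual explanation of where the anomalous $-\tfrac n2\cost$ term comes from (the non-constancy of $\det\nabla^2 g$ along the transport), at the price of working in the "unnatural" chart.

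Two steps need more than the sketch you give. First, the assumption transfer: Assumptions \ref{asmp:densityc}--\ref{asmp:continuityc} are stated in the affine coordinates $\tp$, so the cone condition, the uniform sublevel bound, the uniform quadratic approximation, and the uniform ellipticity all have to be carried through $\phi$; this works because $\phi^{\pm1}$ are smooth with uniformly bounded and uniformly nondegenerate derivatives on the compact supports (so the image of a truncated cone contains a truncated cone of smaller aperture, and $A=(\nabla\phi^{-1})^T L\,(\nabla\phi^{-1})$ stays uniformly elliptic), and because the paper's own remark that Definition \ref{def:L.divergence} agrees with Definition \ref{defn:cdiv} in exponential coordinates identifies $\ldiv$ with the Euclidean $c$-divergence. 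You flag this, but it is a lemma, not a remark. Second, the factorization $A(T(x))=\nabla^2 g(x-T(x))(\nabla T(x))^{-1}$ and the change of variables $\log\det\nabla T=\log\tilde\rho_0-\log\tilde\rho_1\circ T$ are only formal as written: for a general convex $g$ they require the a.e.\ second-order (Alexandrov) differentiability of the potentials and a Jacobian/Monge--Amp\`ere identity in the sense of \cite{GM96, M97}, which is exactly the content the paper gets for free by citing \cite[Theorem 16]{PW18}. Neither point is a flaw in the strategy, but both must be supplied for the argument to be complete.
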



\subsection{Discussion on the gradient flow of entropy}
Consider the heat equation with initial condition $\rho_0$: 
\[
\partial_t \rho(t,x)= \frac{1}{2}\Delta_x \rho(t,x), \quad 0\le t \le 1,\; x\in \rr^d, \quad \rho(0,\cdot)=\rho_0.  
\]
Here $\Delta_x$ is the Laplacian in $x$. Let $\rho(t)$ denote the density $\rho(t, \cdot)$. 

In  \cite{JKO98} Jordan, Kinderlehrer, and Otto (JKO) interprets the heat equation as a gradient flow entropy in the $\wass_2$ metric space. The idea comes from the following discretization scheme that they propose. Let $h>0$ be the step size. Staring with $\rho^{(0)}=\rho_0$,
\eq\label{eq:eulerdisc}
\text{determine $\rho^{(k)}$ that minimizes}\quad \frac{1}{2h} \wass_2^2(\rho, \rho^{(k-1)}) + \left( \Ent(\rho) - \Ent(\rho^{(k-1)}) \right).
\en
Define the piecewise constant interpolation 
\[
\rho_h(t)=\rho^{(k)}, \quad \text{for}\; t \in [kh, (k+1) h)\; \text{and}\; k=0,1,2,\ldots.  
\]
Then, \cite{JKO98} shows that, as $h\downarrow 0$, $\rho_h(t)$ converges to $\rho(t)$ weakly in $\mathbf{L}^1$. Their theorem is somewhat more general since it includes Fokker-Planck equations, but that extension from the heat equation is not very hard. 

In \cite{ADPZ11} the authors developed an alternative approach via the entropic cost function. Recall the discussion in the paragraph above Section \ref{sec:dirichletintro}. Let $\mu_h(x,y)$ be the joint density of $\left( X_0, X_h\right)$. Then it follows that $K_h(\rho_0, \rho_1) := \inf_{\nu \in \Pi(\rho_0, \rho_1)} H\left( \nu \mid \mu_h\right)$. Then the authors argued (dimension one, compactly supported densities) that \eqref{eq:entasympw} holds in the sense of gamma convergence. Since the solution to the heat equation could be seen as successively minimizing the entropic cost over a discrete set of points $h, 2h, 3h, \ldots$, it gives credence to the idea that, for small values of $h$, the Euler discretization scheme \eqref{eq:eulerdisc} coverges to the solution of the heat equation. Making a rigorous theory of such gradient flow of entropy is highly non-trivial and can be found in the textbook \cite{AGS08}. Very recently, a pathwise version of this gradient flow result has appeared in \cite{KST}.

Theorem \ref{thm:mainthm2} regarding the Dirichlet transport hints at a possible extension of the theory of gradient flow of entropy on metric spaces of probabilities. $\mu_h$ corresponds to a stochastic process that evolves by successive multiplications $\odot$ by independent symmetric Dirichlet distributed random variables. Such an evolution is non Fokker-Planck and, in fact, non local, since it proceeds by jumps. However, the entropic cost admits a similar asymptotic formula as that of quadratic Wasserstein. Thus, one naturally expects a theory of ``gradient flow of entropy'' even when the cost of transport $\cost$ is not a metric between probabilities. For non-local operators corresponding to stochastic processes such as jump processes and Markov chains, a rather important theory of gradient flow of entropy has been developing steadily. See articles \cite{Maas11, EM12, Erbar14, EM14, dietert2015, EFLS}. However, the cost function considered in these papers are metrics constructed by generalizations of the Benamou-Brenier formula.   

As a final comment, both quadratic Wasserstein and the Dirichlet transports are related to probability distributions (Gaussian and Dirichlet, respectively) that form an exponential family \cite{A16}. In \cite{P17} it is shown that the transport for all exponential families share similar features including the structure of the Kantorovich potential and the Monge solution. It seems worthwhile to extend the current analysis as well.

\section{General convex cost and the proof of Theorem \ref{thm:mainthmw}.} 

Let $\rho_0, \rho_1$ satisfy Assumption \ref{asmp:density}. Fix $h>0$. Recall the joint density $\mu_h$ from \eqref{eq:whatismuh} which is the product of $\rho_0(x)$ and a transition densities on $\Sigma$:
\eq\label{eq:transition}
p_h(x,y):=\frac{1}{\Lambda_h} \exp\left( -\frac{1}{h} g(x-y)\right), \quad \Lambda_h:= \int_{\rr^d}  \exp\left( -\frac{1}{h} g(z)\right) dz.
\en
Note that $\Lambda_h$ is finite since $g$ is infinity outside a compact ball (determined by the supports of $\rho_0, \rho_1$). 
Recall $\psi$ be the $c$-concave function that determines the Monge map as in \eqref{eq:whatisxstar} transporting $\rho_0$ to $\rho_1$. Let $\psi^*$ denote its c-dual. Assume, as before, $\psi(x)=-\infty$, for $x\notin \support(\rho_0)$ and $\psi^c(y)=-\infty$, for $y \notin \support(\rho_1)$. The following \textit{exponential tilting} $p_h$ is of central importance. 
\eq\label{eq:transitiontilt}
\begin{split}
\tp_h(x,y)&:= \frac{1}{Z_h(x)} \exp\left( \frac{1}{h} \left( \psi(x) + \psi^*(y)\right)\right) p_h(x,y)\\
&= \frac{1}{Z_h(x) \Lambda_h} \exp\left( -\frac{1}{h} D[y\mid x^*]\right), \quad \text{from Definition \ref{defn:cdiv}}\\
&=\frac{1}{ \tLambda_h(x)} \exp\left( -\frac{1}{h} D[y\mid x^*]\right), \quad \text{say}.
\end{split}
\en
Here $\tLambda_h(x):= Z_h(x)\Lambda_h$ and $Z_h(x)$ is the appropriate normalizing function
\[
Z_h(x):= \frac{1}{\Lambda_h} \int \exp\left( -\frac{1}{h} D[y\mid x^*] \right)dy,
\]
which is clearly finite since $\support(\rho_1)$ is compact.

\begin{lemma}\label{lem:partitionfnasymp}
Under Assumptions \ref{asmp:density} and \ref{asmp:continuity} the following statements hold. 
\begin{enumerate}[(i)]
\item The normalizing constant $\Lambda_{h}$ converges to the following limit.
\[
\lim_{h \rightarrow 0+}  \frac{\Lambda_{h}}{(2\pi h)^{d/2}} =\frac{1}{ \sqrt{ \det\left(\nabla^{2}g(0) \right)}}.
\]

\item The normalizing constant $\tLambda_{h}(x)$ has the following limit for $\rho_0$-a.e. $x$ such that $x^* \in \support_0(\rho_0)$.
\[
\lim_{h \rightarrow 0+}  \frac{\tLambda_{h}(x)}{(2\pi h)^{d/2}} = \frac{1}{\sqrt{\abs{J}(x^*)}}, \quad \abs{J}(x^*):=\det\left( A(x^*)\right),
\]
where the matrix valued function $A(\cdot)$ is defined in Assumption \ref{asmp:continuity}. 

\item Recall the Riemannian volume measure from \eqref{eq:riemannvol}. Then, 
\[
\lim_{h\rightarrow 0+}  -\int \log Z_h(x) \rho_0(x) dx= -\frac{1}{2}\log \det \nabla^2 g(0) + H\left( \rho_1 \mid \mu_g\right), 
\]
where $H\left( \rho_1 \mid \mu_g\right)=\frac{1}{2}\int \rho_1(y)\log \abs{J}(y) dy$.
In particular, when $g(z)=\frac{1}{2}\norm{z}^2$ (inside the compact set where it is finite), 
\[
\begin{split}
\lim_{h\rightarrow 0+}  -\int \log Z_h(x) \rho_0(x) dx &= \frac{1}{2} \left( \Ent(\rho_1) - \Ent(\rho_0)\right).
\end{split}
\]
\item Let $\tmu_h(x,y)$ denote the joint density $\rho_0(x)\tp_h(x,y)$.
Let $\rho_1^h$ denote the density of $Y$ under $\tmu_h$. 
Then $\lim_{h\rightarrow 0+} \rho_1^h(y)= \rho_1(y)$ for all $y \in \support(\rho_1)$. 
\end{enumerate}
\end{lemma}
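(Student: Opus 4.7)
My plan is to treat parts (i) and (ii) together as Laplace-type asymptotic analyses of integrals of the form $\int e^{-F/h}\,dz$ where $F \geq 0$ attains its minimum value $0$ with a positive-definite Hessian at a unique interior point of its effective domain. In (i), $F = g$ has minimizer $z = 0$: after the rescaling $z = \sqrt{h}\,u$ one gets $\Lambda_h = h^{d/2}\int e^{-g(\sqrt{h}u)/h}\,du$, the Taylor remainder \eqref{eq:remain2} gives pointwise convergence of the integrand to $e^{-\frac{1}{2}u^T\nabla^2 g(0) u}$, and strict convexity of $g$ with $g(0)=0$ and invertibility of $\nabla^2 g(0)$ yield a quadratic lower bound $g(z) \geq c|z|^2$ near the origin plus a positive lower bound outside any neighborhood of it (using that $g$ is finite on a compact ball), which together produce a dominating Gaussian. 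Dominated convergence then evaluates the limit as $(2\pi)^{d/2}/\sqrt{\det\nabla^2 g(0)}$. Part (ii) follows the same template with $F(y) = D[y\mid x^*]$ and minimizer $y = x^*$: the hypothesis $x^* \in \support_0(\rho_1)$ guarantees $x^*$ is interior, so after rescaling $y = x^* + \sqrt{h}\,u$ the rescaled domain exhausts $\rr^d$; the uniform quadratic approximation \eqref{eq:remain1} handles the local behavior and the uniform lower bound \eqref{eq:divsublev} handles the tails.

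For part (iii), I combine the previous two parts: since $Z_h(x) = \tLambda_h(x)/\Lambda_h$, dividing the asymptotics gives $\log Z_h(x) \to \tfrac{1}{2}\log\det\nabla^2 g(0) - \tfrac{1}{2}\log\det A(x^*)$ for $\rho_0$-a.e.\ $x$. Uniform ellipticity of $A$ on $\support_0(\rho_1)$ together with the fact that the constants in \eqref{eq:remain1} and \eqref{eq:divsublev} do not depend on $x^*$ yields an $h$-independent two-sided bound on $|\log Z_h(x)|$ over $\support(\rho_0)$, justifying dominated convergence; the pushforward identity $T_\#\rho_0 = \rho_1$ then converts $\int\rho_0(x)\log\det A(x^*)\,dx$ into $\int\rho_1(y)\log\det A(y)\,dy = 2H(\rho_1\mid\mu_g)$. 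For the quadratic cost $g(z) = \tfrac{1}{2}|z|^2$ I still need to identify $H(\rho_1\mid\mu_g) = \tfrac{1}{2}(\Ent(\rho_1) - \Ent(\rho_0))$: using the Brenier potential $\phi(x) = \tfrac{1}{2}|x|^2 - \psi(x)$ and convex duality one obtains $\psi^*(y) = \tfrac{1}{2}|y|^2 - \phi^*(y)$, so $A(x^*) = I - \nabla^2\psi^*(x^*) = \nabla^2\phi^*(x^*) = (\nabla^2\phi(x))^{-1}$; the Monge-Amp\`ere equation $\det\nabla^2\phi(x) = \rho_0(x)/\rho_1(T(x))$ then gives $\log\det A(T(x)) = \log\rho_1(T(x)) - \log\rho_0(x)$, and the pushforward telescopes this to $\Ent(\rho_1) - \Ent(\rho_0)$.

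For part (iv), the aim is a Laplace computation in the $x$ variable at fixed $y$. I fix $y$ in the interior of $\support(\rho_1)$ at which $T^{-1}$ is differentiable, available almost everywhere by the Alexandrov regularity in \cite{GM96}, and set $x_0 = T^{-1}(y)$. Substituting $x = x_0 + \sqrt{h}\,w$ in the formula $\rho_1^h(y) = \int \rho_0(x)\,\tLambda_h(x)^{-1}\exp(-D[y\mid x^*]/h)\,dx$, the prefactor converges by continuity of $\rho_0$ and part (ii) to $\rho_0(x_0)\sqrt{\det A(y)}/(2\pi h)^{d/2}$, while in the exponent $x^* - y = T(x_0 + \sqrt{h}w) - T(x_0) \approx \sqrt{h}\,\nabla T(x_0)w$, so \eqref{eq:remain1} plus continuity of $A$ yields $D[y\mid x^*]/h \to \tfrac{1}{2}w^T\nabla T(x_0)^T A(y)\nabla T(x_0)w$. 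The resulting Gaussian integrates to $(2\pi h)^{d/2}/(\sqrt{\det A(y)}\,|\det\nabla T(x_0)|)$, and the determinants cancel to give $\rho_1^h(y) \to \rho_0(x_0)/|\det\nabla T(x_0)| = \rho_1(y)$ by the Jacobian change-of-variables identity for $T_\#\rho_0 = \rho_1$.

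The hardest step will be the uniformity required in part (iii). Bounding $|\log Z_h(x)|$ uniformly for $x \in \support(\rho_0)$ and all small $h$ requires that the Laplace integral in (ii) not lose a constant fraction of its mass even when $x^*$ lies close to $\partial \support(\rho_1)$. The interior-cone condition in Assumption \ref{asmp:density} is precisely what guarantees that the rescaled effective domain $(\support(\rho_1) - x^*)/\sqrt{h}$ contains a fixed solid cone uniformly in $x^*$, so the Gaussian always integrates to a positive fraction of the full one; combining this with the assumed zero Lebesgue measure of $\partial\support(\rho_1)$ lets one dispose of the boundary set in the integral over $x$. A secondary subtlety in (iv) is that the expansion requires differentiability of $T$ at $T^{-1}(y)$, which is available only at Lebesgue-a.e.\ $y$; matching the literal statement "for all $y \in \support(\rho_1)$" should likely be understood as holding on the full-measure set where the regularity of \cite{GM96} applies, which is all that is needed for the downstream dominated-convergence argument in Theorem \ref{thm:mainthmw}.
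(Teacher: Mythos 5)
Your treatment of parts (i)--(iii) is essentially the paper's: the same Laplace/Gaussian change-of-measure computation for $\Lambda_h$ and $\tLambda_h(x)$ (the paper phrases it as an $\epsilon$--$\delta$ split over $B_\delta$ and its complement rather than a $\sqrt{h}$-rescaling plus dominated convergence, but the estimates are identical), the same use of the cone condition to get the $x$-uniform two-sided bound on $\tLambda_h(x)\det A^{1/2}(x^*)/(2\pi h)^{d/2}$ that justifies dominated convergence in (iii), and the same Brenier/Monge--Amp\`ere identity $\abs{J}(x^*)=\rho_1(x^*)/\rho_0(x)$ to reduce the quadratic case to $\frac12(\Ent(\rho_1)-\Ent(\rho_0))$.

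Part (iv) is where you diverge, and your route has a genuine gap. You perform the Laplace analysis in the $x$ variable around $x_0=T^{-1}(y)$ and expand $T(x_0+\sqrt{h}\,w)-T(x_0)\approx \sqrt{h}\,\nabla T(x_0)w$, then cancel $\abs{\det\nabla T(x_0)}$ against the Jacobian identity $\rho_0(x_0)=\rho_1(y)\abs{\det\nabla T(x_0)}$. Neither ingredient is available under Assumptions \ref{asmp:density}--\ref{asmp:continuity}: the hypotheses control $D[y\mid z]$ as a function of $y$ near $z$ and the a.e.\ continuity of $A(\cdot)$, but say nothing about differentiability of $T$ itself, and even where Alexandrov differentiability holds the remainder in $T(x_0+\sqrt{h}w)=T(x_0)+\sqrt{h}\nabla T(x_0)w+o(\sqrt{h}\abs{w})$ is not uniform in $w$, which is exactly what you need to pass to the limit in the rescaled integral; the pointwise Monge--Amp\`ere equation for general convex costs is a further unproved input. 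The paper sidesteps all of this with a measure-theoretic change of variables: push forward by $x\mapsto z=x^*$ so that $\rho_0(x)\,dx$ becomes exactly $\rho_1(z)\,dz$ (no Jacobian, no differentiability of $T$), yielding
\[
\rho_1^h(y)-\rho_1(y)=\int \frac{\rho_1(z)-\rho_1(y)}{\tLambda_h(z_*)}\,e^{-D[y\mid z]/h}\,dz,
\]
after which the limit follows from continuity of $\rho_1$, the quadratic expansion of $D$ and the a.s.\ continuity of $A$ applied in the $z$ variable near $y$, together with the tail bound \eqref{eq:divsublev}. You should adopt this change of variables; as written, your argument for (iv) does not close under the stated assumptions. (Your closing remark that the pointwise limit can only be expected on a full-measure subset of $\support(\rho_1)$ where $A(y)$ exists is correct and matches what the paper actually proves.)
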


The long proof of this lemma is broken down in separate parts.

\begin{proof}[Proof of Lemma \ref{lem:partitionfnasymp} (i)] For notational simplicity let $V$ temporarily denote the invertible Hessian matrix $\nabla^2 g(0)$. Let $q_h(\cdot)$ denote the Gaussian density on $\rr^d$ with mean $0$ and covariance matrix $h V^{-1}$. That is, 
\[
q_h(z)=\frac{\det\left( V^{1/2}\right)}{\left( 2\pi h\right)^{d/2}} \exp\left( -\frac{1}{2 h} z^T V z\right), \quad z \in \rr^d. 
\] 
Recall $r(\cdot)$ from \eqref{eq:remain2}. Then, by a change of measure,
\eq\label{eq:2bnd}
\begin{split}
\Lambda_h \frac{\det\left( V^{1/2}\right)}{\left( 2\pi h\right)^{d/2}} &=\int_{\rr^d} \exp\left( -\frac{1}{h} r(z) \right) q_h(z) dy.
\end{split}
\en
We show that the right side converges to one as $h \rightarrow 0+$. 

By \eqref{eq:remain2} and that $V$ is non-singular, for any $\epsilon \in (0,1)$, there exists a $\delta >0$ such that 
\eq\label{eq:localthird1}
\abs{r(z)} \le \frac{\epsilon}{2} z^T V z,
\en
for all $z \in B_\delta(0)$, the Euclidean ball of radius $\delta$ around $0$. Consider the decomposition
\eq\label{eq:localdecomp}
\begin{split}
\int e^{-r(z)/ h} q_h(z) dz &= \int_{B_\delta(0)} e^{-r(z)/h} q_h(z) dz + \int_{B^c_\delta(0)} e^{-r(z)/h} q_h(z) dz.
\end{split}
\en

We will analyze the two terms separately. For the first term, 
\[
\begin{split}
 \int_{B_\delta(0)} & e^{-r(z)/h} q_h(z) dz \le  \int_{B_\delta(0)} \exp\left(\frac{\epsilon}{2h} z^T V z\right) q_h(z) dz \le \int_{\rr^d} \exp\left(\frac{\epsilon}{2 h} z^T V z\right) q_h(z) dz\\
 &\le \frac{\det(V^{1/2}) }{\sqrt{(2\pi  h)^d}} \int_{\rr^d} \exp\left[ -\frac{(1-\epsilon)}{2 h} z^T V z\right] dy  \le \left(1 - \epsilon \right)^{-d/2}. 
\end{split}
\]

For the second term on the right side of \eqref{eq:localdecomp}, reverse the change of measure.
\[
\begin{split}
\int_{B_\delta^c(0)} e^{-r(z)/h} q_h(z) dz = \frac{\det(V^{1/2})}{\sqrt{(2\pi h)^d}} \int_{B_\delta^c(0)} e^{-g(z)/h} dz.
\end{split}
\] 
Since $g$ is strictly convex, it has a positive infimum on $B_\delta^c(0)$. Moreover, $g=\infty$ outside a bounded domain. Hence, there exists positive constants $M, m$ such that the above expression is bounded above by 
\[
M \frac{\det(V^{1/2})}{\sqrt{(2\pi h)^d}} e^{-m/h}
\]
which tends to zero as $h\rightarrow 0+$. Thus, combining the two bounds, 
\[
\int e^{-r(z)/h} q_h(z) dz \le \left(1 - \epsilon \right)^{-d/2} + \epsilon, \quad \text{for all small enough $h>0$},
\]
which shows an upper bound on the right side of \eqref{eq:2bnd}.

For the lower bound, discard the integrand \eqref{eq:localdecomp} outside $B_\delta(0)$. For all small enough $h>0$, $q_h(\cdot)$ puts its almost entire mass in $B_\delta(0)$. Hence, for all small enough $h$,
\[
\begin{split}
\int &e^{-r(z)/ h} q_h(z) dz \ge  \int_{B_\delta(0)} e^{-r(z)/ h} q_h(z) dz \ge  \int \exp\left( - \frac{\epsilon}{2h} z^T V z \right) q_h(z) dz - \epsilon\\
&\ge  \int \frac{\det(V^{1/2})}{\sqrt{(2\pi h)^d}}\exp\left[ -\frac{(1+\epsilon)}{2h} z^T V z\right] dz - \epsilon \ge \left(1 + \epsilon \right)^{-d/2} - \epsilon. 
\end{split}
\]
Thus we get that 
\[
(1+\epsilon)^{-d/2} - \epsilon \le \liminf_{h\rightarrow 0+}\Lambda_h\frac{\det\left( V^{1/2}\right)}{\left( 2\pi h\right)^{d/2}}\le \limsup_{h\rightarrow 0+}\Lambda_h\frac{\det\left( V^{1/2}\right)}{\left( 2\pi h\right)^{d/2}} \le (1-\epsilon)^{-d/2} + \epsilon. 
\]
As we now take $\epsilon \downarrow 0$, we recover our claim. 
\end{proof}

\begin{proof}[Proof of Lemma \ref{lem:partitionfnasymp} (ii)] The proof of part (ii) is very similar to that of part (i) except at every step we will require an estimate over $x\in \support(\rho_0)$. For an $x$ such that $A(x^*)$ exists and the related conditions in Assumption \ref{asmp:continuity} hold, define the Gaussian probability transition kernel
\[
\tq_h(x,y)= \frac{\det(A^{1/2}(x^*))}{(2\pi h)^{d/2}} \exp\left( -\frac{1}{2h} (y-x^*)^T A(x^*) (y-x^*) \right), \; y \in \rr^d,
\]
which has mean $x^*$ and covariance $hA^{-1}(x^*)$. Then
\eq\label{eq:2bnd1}
\tLambda_h(x) \frac{\det(A^{1/2}(x^*))}{(2\pi h)^{d/2}}= \int_{\rr^d} \exp\left( - \frac{1}{h}R[y \mid x^*] \right) \tq_h(x,y) dy, 
\en
where the remainder term $R[y \mid x^*]$ is defined in \eqref{eq:remain1}. We show that the right side converges to one as $h \rightarrow 0+$. 

Since the analysis is similar to the proof of (i), it suffices to highlight just the main differences. First, by \eqref{eq:remain1} and the assumption that $A(x^*)$ has eigenvalues uniformly bounded away from zero, one can find a $\delta>0$ such that for all $x\in \support(\rho_0)$,
\eq\label{eq:estryx}
\abs{R[y \mid x^*]} \le \frac{\epsilon}{2} (y-x^*)^T A(x^*) (y-x^*), \quad y \in B_\delta(x^*) \cap \support(\rho_1). 
\en
If $y \notin \support(\rho_1)$, by definition $R[y \mid x^*]=\infty$. So the bound 
\[
\int_{B_\delta(x^*)} e^{-\frac{1}{h}R[y\mid x^*]} \tq_h(x,y) dy \le (1-\epsilon)^{-d/2}
\]
holds as before in (i). On $B_\delta^c(x^*)$, the infimum of $D[y \mid x^*]$ is positive by \eqref{eq:divsublev} and the divergence is infinity  outside a compact set that can be chosen uniformly for all $x\in \support(\rho_0)$. Therefore the complete upper bound holds as in (i) uniformly over our choice of $x$ in the sense that, for small enough $h>0$, 
\[
\esssup_{x\in \support(\rho_0)} \int_{\rr^d} \exp\left( - R[y \mid x^*] \right) \tq_h(x,y) dy \le (1-\epsilon)^{-d/2} + \epsilon. 
\]

We have to be slightly more careful in the proof of lower bound. Fix an $x$ such that $x^*\in \support_0(\rho_0)$. First, we derive a uniform lower bound. Due to the assumed cone condition in Assumption \ref{asmp:density}, there exists $w$ such that for all small enough $\delta >0$, $B_\delta(x^*) \cap (x^* + K(w, \theta)) \subseteq \support(\rho_1)$. Hence,
\[
\int_{B_\delta(x^*)} e^{-\frac{1}{h}R[y\mid x^*]} \tq_h(x,y) dy \ge \int_{B_\delta(x^*) \cap (x^* + K(w, \theta)) } e^{-\frac{1}{h}R[y\mid x^*]} \tq_h(x,y) dy
\]
Thus, on the set $B_\delta(x^*) \cap (x^* + K(w, \theta))$, the estimate \eqref{eq:estryx} is valid. Thus,
\eq
\begin{split}
\int_{B_\delta(x^*)}& e^{-\frac{1}{h}R[y\mid x^*]} \tq_h(x,y) dy 
\ge \int_{B_\delta(x^*)\cap (x^* + K(w, \theta))} \exp\left( -\frac{\epsilon}{2h} (y-x^*)^T A(x^*) (y-x^*)\right) \tq_h(x,y) dy\\
 \ge &\frac{\det A^{1/2}(x^*)}{(2\pi h)^{d/2}}\int_{B_\delta(x^*)\cap (x^* + K(w, \theta))}  \exp\left( -\frac{(1+\epsilon)}{2h} (y-x^*)^T A(x^*) (y-x^*)\right)  dy. 
\end{split}
\en 
Due to the assumed uniform ellipticity of the family of matrices $A(\cdot)$, the last integral is uniformly bounded in $x$. Hence, there is a uniform lower bound
\[
\essinf_{x \in \support(\rho_0)}\int_{B_\delta(x^*)} e^{-\frac{1}{h}R[y\mid x^*]} \tq_h(x,y) dy \ge c_0,\quad \text{for all $h$ small}, 
\]
for some $c_0 >0$. 

On the other hand, since $x^* \in \support_0(\rho_1)$, it is also in $\interior\left( \support(\rho_1)\right)$. Hence, for all small enough $\delta>0$, with a choice depending on $x$, the entire ball $B_\delta(x^*)\subseteq \support(\rho_1)$. Moreover, for all small enough $h>0$, $\tq_h(x, \cdot)$ puts almost entire mass inside $B_\delta(x^*)$. Hence,
\eq
\begin{split}
\int_{B_\delta(x^*)} &\exp\left( -\frac{\epsilon}{2h} (y-x^*)^T A(x^*) (y-x^*)\right) \tq_h(x,y) dy\\
 \ge \int_{\rr^d} & \exp\left( -\frac{\epsilon}{2h} (y-x^*)^T A(x^*) (y-x^*)\right) \tq_h(x,y) dy - \epsilon. 
\end{split}
\en 
The rest follows exactly as in (i). 

Hence, given $\epsilon >0$, there exists $h(\epsilon)>0$ such that for all $0< h < h(\epsilon)$, the following bound holds for $\rho_0$ a.e. $x$:
\[
\begin{split}
(1+\epsilon)^{-d/2} - \epsilon &\le \liminf_{h\rightarrow 0+} \tLambda_h(x)\frac{\det\left( A^{1/2}(x^*)\right)}{\left( 2\pi h\right)^{d/2}}\\
\le& \limsup_{h\rightarrow 0+} \tLambda_h(x)\frac{\det\left( A^{1/2}(x^*)\right)}{\left( 2\pi h\right)^{d/2}} \le (1-\epsilon)^{-d/2} + \epsilon. 
\end{split}
\]  
Taking $\epsilon \downarrow 0$ proves (ii). Additionally, there are uniform upper and lower bounds for $\tLambda_h(x)\frac{\det\left( A^{1/2}(x^*)\right)}{\left( 2\pi h\right)^{d/2}}$ that are valid for all $x$ such that $x^* \in \support_0(\rho_1)$. 
\end{proof}

\begin{proof}[Proof of Lemma \ref{lem:partitionfnasymp} (iii)] Since $Z_h(x) = \tLambda_h(x)/\Lambda_h$, from the convergence statements in parts (i) and (ii), and the uniform bounds mentioned in the last paragraph, it follows by an application of the Dominated Convergence Theorem that 
\eq\label{eq:partitionintasymp}
\begin{split}
\lim_{h\rightarrow 0+} \int \log Z_h(x) \rho_0(x) dx &= \frac{1}{2} \log \det \nabla^2 g(0) - \frac{1}{2}\int \log \abs{J}(x^*) \rho_0(x) dx\\
&=\frac{1}{2} \log \det \nabla^2 g(0) - \frac{1}{2} \int \log \abs{J}(y) \rho_1(y) dy.
\end{split}
\en
This proves our claim.

Now consider the special case $g(x-y)=\frac{1}{2}\norm{x-y}^2$, the quadratic Wasserstein cost. By Brenier's theorem, there exists a convex function $\varphi$ such that $x^*=\nabla \varphi(x)$. In terms of this function $\varphi$ and its convex conjugate $\varphi^*$, we can write 
\[
\psi(x)= \frac{1}{2} \norm{x}^2 - \varphi(x), \quad \psi^*(y) = \frac{1}{2} \norm{y}^2 - \varphi^*(y).
\] 
Thus, $D[y \mid x^*]=  \varphi(x) + \varphi^*(y) - \iprod{x,y}$ is the Bregman divergence of the convex function $\varphi$.
Hence 
\[
\det\left( \nabla^2 g(0)\right)=1, \quad A(y)=\nabla^2 \varphi^*(y), \quad \abs{J}(y)= \det\left( \nabla^2 \varphi^*(y)\right).
\]

The rest is a computation similar to \cite[Theorem 4.4]{M97}. Use the fact that the push-forward of $\rho_1$ by $\nabla \varphi^*$ gives us $\rho_0$. Thus, by the change-of-variable formula
\[
\abs{J}(x^*) = \frac{\rho_1(x^*)}{\rho_0(x)}, \; \text{or}\; \log \abs{J}(x^*) = \log \rho_1(x^*) - \log \rho_0(x).  
\]
Thus
\eq\label{eq:hessianentropy}
\begin{split}
\Ent(\rho_0)&= \int \rho_0(x) \log \rho_0(x) dx = \int \rho_1(y) \log \frac{\rho_1(y)}{\abs{J}(y)} dy\\
&= \Ent(\rho_1) - \int \rho_1(y) \log \abs{J}(y) dy.
\end{split}
\en
Therefore, $ \int \rho_1(y) \log \abs{J}(y) dy=\Ent(\rho_1)- \Ent(\rho_0)$, and, hence, from \eqref{eq:partitionintasymp},
\[
\begin{split}
\lim_{h\rightarrow 0+} &-\int \log Z_h(x) \rho_0(x) dx=- \frac{1}{2} \log (1) + \frac{1}{2} \int \log \abs{J}(y) \rho_1(y) dy \\
&=\frac{1}{2} \left( \Ent(\rho_1) - \Ent(\rho_0)\right).
\end{split}
\]
This completes the proof of the statement. 
\end{proof}

\begin{proof}[Proof of Lemma \ref{lem:partitionfnasymp} (iv)]
Consider the explicit expressions for $\rho_1^h(y)$ obtained by integrating $\tmu_h$: 
\[
\rho_1^h(y) = \int  \frac{\rho_0(x)}{\tLambda_h(x)} \exp\left( -\frac{1}{h} D[y \mid x^*]\right) dx.
\]
Note that $\support(\rho_1^h)=\support(\rho_1)$, since the divergence is infinity outside this set. 

The map $x \mapsto x^*$ is invertible on $\support_0(\rho_1)$ and the inverse is measurable. Change the variable to $z=x^*$ and denote inverse as $x=z_*$. Then
\eq\label{eq:taylorrho1}
\rho_1^h(y)  = \int  \frac{\rho_1(z)}{\tLambda_h(z_*)}  \exp\left( -\frac{1}{h}D[y \mid z] \right) dz.   
\en

Note that
\eq\label{eq:ptwisediff}
\rho_1^h(y) - \rho_1(y) = \int  \frac{\rho_1(z) - \rho_1(y)}{\tLambda_h(z_*)}  \exp\left( -\frac{1}{h}D[y \mid z] \right) dz.  
\en
Let $y\in \support_0(\rho_1)$ be such that $A(y)$ exists and the conditions in Assumption \ref{asmp:continuity} are satisfied. As before, fix a $\delta >0$ such that $B_\delta(y) \subseteq \support(\rho_1)$. Split the integral inside $B_\delta(y)$ and outside. Inside $B_\delta(y)$, we do a change of measure, 
\[
\begin{split}
\frac{1}{\tLambda_h(z_*)} e^{-\frac{1}{h} D[y \mid z]} &= \frac{(\sqrt{2\pi h})^d}{\Lambda_h(z_*) \sqrt{\abs{J}(y)}} e^{-\frac{1}{h} D[y \mid z] + \frac{1}{2h} (z-y)^T A(y) (z-y)}\\
&\times \frac{\sqrt{\abs{J}(y)}}{(\sqrt{2\pi h})^d} e^{-\frac{1}{2h} (z-y)^T A(y) (z-y)}. 
\end{split}
\]

Note that, for almost every $z\in B_\delta(y)$, the following is well-defined,  
\[
-\frac{1}{h}D[y \mid z] + \frac{1}{2h} (z-y)^T A(y) (z-y)= \frac{1}{2h} (z-y)^T \left( A(y) - A(z) \right) (z-y) - \frac{1}{h}R[y \mid z].
\]
By a.s. continuity of the matrix valued map $A(\cdot)$, the density $\rho_1$, and assumption \eqref{eq:remain1}, for any $\epsilon >0$, we can choose $\delta >0$ such that 
\[
\begin{split}
&\abs{\frac{1}{2h} (z-y)^T \left( A(y) - A(z) \right) (z-y) -\frac{1}{h} R[ y \mid z]} \le \frac{\epsilon}{2h} (z-y)^T A(y) (z-y) , \; \forall z\in B_\delta(y),\\
& \text{and}\quad 1- \epsilon < \frac{ \tLambda_h(y_*)}{ \tLambda_h(z_*)} \le 1 + \epsilon.
\end{split}
\]

For such a choice of $\delta$, 
\[
\frac{(\sqrt{2\pi h})^d}{\tLambda_h(z_*) \sqrt{\abs{J}(y)}} e^{-\frac{1}{h} D[y \mid z] + \frac{1}{2h} (z-y)^T A(y) (z-y)}\le \frac{(1+\epsilon) (\sqrt{2\pi h})^d}{\tLambda_h(y_*) \sqrt{\abs{J}(y)}} e^{-\frac{\epsilon}{2h} (z-y)^T A(y) (z-y)}.
\]
Hence
\[
\frac{1}{\tLambda_h(z_*)} e^{-\frac{1}{h} D[y \mid z]} \le (1+\epsilon)\frac{(\sqrt{2\pi h})^d}{\tLambda_h(y_*) \sqrt{\abs{J}(y)}}  \times  \frac{\sqrt{\abs{J}(y)}}{(\sqrt{2\pi h})^d} e^{-\frac{(1-\epsilon)}{2h} (z-y)^T A(y) (z-y)}.
\]

Now use Lemma \ref{lem:partitionfnasymp} (ii) to choose $h$ small enough such that $(\sqrt{2\pi h})^d \le (1+\epsilon) \tLambda_h(y_*) \sqrt{\abs{J}(y)}$. Thus, for all sufficiently small $h$, 
\[
\begin{split}
\int_{B_\delta(y)} &\abs{\rho_1(z)-\rho_1(y)} \frac{1}{\Lambda_h(z_*)} e^{-\frac{1}{h} D[y \mid z]}dz \\
\le & (1+\epsilon)^2 \int_{B_\delta(y)}  \abs{\rho_1(z)-\rho_1(y)} \frac{\sqrt{\abs{J}(y)}}{(\sqrt{2\pi h})^d} e^{-\frac{(1-\epsilon)}{2h} (z-y)^T A(y) (z-y)}\\
\le &(1+\epsilon)^2\int_{\rr^d}  \abs{\rho_1(z)-\rho_1(y)} \frac{\sqrt{\abs{J}(y)}}{(\sqrt{2\pi h})^d} e^{-\frac{(1-\epsilon)}{2h} (z-y)^T A(y) (z-y)}.
\end{split}
\]
By the assumed continuity of $\rho_1$, the above converges to zero as we take $\delta \downarrow 0$.

Now for the integral outside $B_\delta(y)$ we show it is negligible as in the proofs of parts (i) and (ii) of Lemma \ref{lem:partitionfnasymp}.
As in there, there exists constants $m, m'>0$ such that
\[
\int_{B^c_\delta(y)} \abs{\rho_1(z)-\rho_1(y)}  \frac{1}{\Lambda_h(z_*)}e^{-\frac{1}{h} D[y \mid z]} \le m' h^{-d/2} e^{-m/h}. 
\] 
From \eqref{eq:ptwisediff}, the pointwise convergence now follows.

Moreover, using the cone condition in Assumption \ref{asmp:density} and the fact that $\rho_1$is bounded away from zero, one can prove as before, uniform upper and lower bounds, i.e., for some positive constant $C$,
\eq\label{eq:densityratiobound}
\frac{1}{C} \le \frac{\rho_1^h(y)}{\rho_1(y)} \le C,\quad \text{for all $y \in \support_0(\rho_1)$}.
\en
We skip the similar details.
\end{proof}

\begin{proof}[Proof of the Theorem \ref{thm:mainthmw}] It is enough to prove the claimed limit of $K_h$. Since, for $K_h'$ use the relationship from \eqref{eq:khkhp}
\[
K'_h = K_h + \Ent(\rho_0) - \int \log \Lambda_h(x) \rho_0(x) dx.
\] 
By Lemma \ref{lem:partitionfnasymp} (i),
\[
\lim_{h\rightarrow 0+}\left[ \int \log \Lambda_h(x) \rho_0(x) dx - \frac{d}{2}\log(2\pi h)\right]= -\frac{1}{2} \log \det \nabla^2 g(0).
\]  
Thus, asymptotically, 
\[
K'_h= \Ent(\rho_0) - \frac{d}{2} \log(2\pi h) + \frac{1}{h}\wass_g(\rho_0, \rho_1)+ H(\rho_1 \mid \mu_g) + o(1).
\]
This gives us the statement regarding $K_h'$.

The limit of $K_h$ is split separately as a lower and an upper bound. The lower bound is easy. Consider the exponential tilting $\tmu_h$ from Lemma \ref{lem:partitionfnasymp} (iv). Recall that $\mu_h$ denotes the joint density $\rho_0(x)p_h(x,y).$
Then, from \eqref{eq:transitiontilt}, 
\[
\frac{d\tmu_h}{d\mu_h}(x,y)=\frac{\tp_h(x,y)}{p_h(x,y)}=\frac{1}{Z_h(x)} \exp\left( \frac{1}{h}\left( \psi(x) + \psi^*(y) \right)\right)
\]
For any $\nu\in M_1(\Sigma)$, absolutely continuous with respect to $\tmu_h$,
\[
\begin{split}
H(\nu \mid \tmu_h)&= \nu\left( \log \frac{d\nu}{d\tmu_h}\right)=\nu\left( \log \frac{d\nu}{d\mu_h} - \log \frac{d\tmu_h}{d\mu_h} \right)\\
&= H\left( \nu \mid \mu_h \right) - \frac{1}{h}\nu\left( \psi(x) + \psi^*(y)\right) + \nu\left( \log Z_h(x)\right).
\end{split}
\]

If moreover $\nu \in \Pi(\rho_0, \rho_1)$, then, by Kantorovich duality, 
\[
\begin{split}
\frac{1}{h} \int \left( \psi(x) + \psi^*(y)\right) d\nu= \frac{1}{h}\left[ \int \psi(x) \rho_0(x) dx + \int \psi^*(y) \rho_1(y) dy\right]= \frac{1}{h} \wass_g(\rho_0, \rho_1)
\end{split}
\]
and $\nu\left( \log Z_h(x)\right) = \int \log\left( Z_h(x)\right) \rho_0(x) dx < \infty$ by Lemma \ref{lem:partitionfnasymp} (iii). 

Thus, alternatively, $K_h$ from \eqref{eq:entropic} is given by
\[
K_h = \frac{1}{h}\wass_g(\rho_0, \rho_1) - \int \log\left( Z_h(x)\right) \rho_0(x) dx + \inf_{\nu \in \Pi(\rho_0, \rho_1)} H\left( \nu \mid \tmu_h \right).
\]
Since $ H\left( \nu \mid \tmu_h \right)\ge 0$, we immediately get the lower bound
\eq\label{eq:lbnd}
K_h - \frac{1}{h}\wass_g(\rho_0, \rho_1) \ge - \int \log\left( Z_h(x)\right) \rho_0(x) dx = -\frac{1}{2} \log \det \nabla^2 g(0) + H(\rho_1 \mid \mu_g) + o(1),
\en
as $h\rightarrow 0+$ (by Lemma \ref{lem:partitionfnasymp}) (iii).

We now obtain a matching upper bound by constructing a coupling of $(\rho_0, \rho_1)$ that is close to $\tmu_h$ in relative entropy. Let $\rho_1^h$ be the density of $Y$ under $\tmu_h$. Recall that the measurable inverse Monge map $x \mapsto x^*$ exists on $\support_0(\rho_1)$. We will denote this inverse by the notation $z \mapsto z_*$. Push-forward the joint density $\tmu_h(x,y)$ by the map $x\mapsto z=x^*$ to obtain
\[
\tnu_h(z, y):= \rho_1(z) \frac{1}{\tLambda_h(z_*)} e^{- \frac{1}{h} D[y \mid z]}.
\]
Under $\tnu_h$, the density of $Z$ is $\rho_1$ and the density of $Y$ is $\rho_1^h$. Consider the conditional density of $Z$, given $Y=y$, 
\[
\tchi_h(y, z)= \frac{\rho_1(z)}{\rho_h^1(y)}\frac{1}{\tLambda_h(z_*)} e^{- \frac{1}{h} D[y \mid x^*]},
\]
and the joint density of $(X, Y, Z)$ obtained by $\rho_0(x) \tp_h(x, y) \tchi_h(y, z)$. This represents a two-step Markov chain where $X$ has density $\rho_0$, $Y$, given $X=x$, has density $\tp_h(x,\cdot)$, and $Z$, given $X=x, Y=y$, has density $\tchi_h(y, \cdot)$. Integrate out $Y$ from the above joint density to obtain
\[
f_{2h}(x, z):= \int \rho_0(x) \tp_h(x, y) \tchi_h(y, z) dy = \frac{\rho_0(x)}{\tLambda_h(x) \tLambda_h(z_*)} \int \frac{\rho_1(z)}{ \rho_1^h(y)} e^{-\frac{1}{h} D[y \mid x^*] - \frac{1}{h} D[y \mid z]} dy.
\]

It is obvious from our construction that $f_{2h}$ is a coupling of $(\rho_0, \rho_1)$. We will show that 
\eq\label{eq:2hre}
\lim_{h \rightarrow 0+} H\left( f_{2h} \mid \tmu_{2h} \right)=0. 
\en
This will prove our upper bound (by replacing $h$ by $h/2$) and complete the proof of the theorem. The rest of the proof is devoted to proving \eqref{eq:2hre}.

Before we prove the claim let us give an outline of the argument. For $h \approx 0$, under $ (X, Z) \sim \tp_{2h}$, $X \sim \rho_0$ and $Z$, given $X=x$, is approximately $N(x^*, 2h A^{-1}(x^*))$. Similarly, consider $(X,Y,Z) \sim \rho_0(x) \tp_h(x, y) \tchi_h(y, z)$. Then, given $X=x$, $Y \approx N(x^*, h A^{-1}(x^*))$. What we show is that $Z$, given $Y=y, X=x$, is also approximately $N\left( y, h A^{-1}(x^*)\right)$. Hence, $Z-x^*$, given $X=x$, is approximately the sum of two i.i.d. $N(0, h A^{-1}(x^*))$ random variables, whose distribution is $N(0, 2h A^{-1}(x^*))$. Hence, $f_{2h} \approx \tmu_{2h}$ in the sense of relative entropy.   

To proceed with the proof, fix an arbitrary $\epsilon >0$. We will show 
\eq\label{eq:limsupinf}
\limsup_{h \rightarrow 0+} H\left( f_{2h} \mid \tmu_{2h} \right) \le \epsilon.
\en
Since $H\left( f_{2h} \mid \tmu_{2h} \right)\ge 0$, by taking $\epsilon$ to zero we get our result. 

Consider $f(x,z)$. Recall that, for any $\epsilon >0$, there exists $\delta >0$ such that \eqref{eq:remain1} holds. For such a $\delta$, consider two possible cases: either $\norm{x^* - z} \le 2\delta$ or $\norm{x^* - z} > 2\delta$. In the latter case, for every $y \in \rr^d$, it belongs to either $A:=\{y :\; \norm{y - x^*} > \delta\}$ or $B:=\{ y:\; \norm{y - z} > \delta\}$. Thus, by \eqref{eq:divsublev}, there is an $m>0$, such that either $D[y \mid x^*] > m$ or $D[y \mid z] > m$. Since $D[y \mid \cdot]=\infty$ if $y \notin \support(\rho_1)$, if $\norm{x^* - z} \ge 2\delta$, 
\[
\begin{split}
f_{2h}(x, z) &:\le \frac{\rho_0(x)\rho_1(z)}{\tLambda_h(x) \tLambda_h(z_*)} e^{-m/h}\int_{AB^c \cap \support(\rho_1)} \frac{1}{ \rho_1^h(y)} e^{- \frac{1}{h} D[y \mid z]} dy\\
&+ \frac{\rho_0(x)\rho_1(z)}{\tLambda_h(x) \tLambda_h(z_*)} e^{-m/h} \int_{A^cB\cap \support(\rho_1)} \frac{1}{ \rho_1^h(y)} e^{-\frac{1}{h} D[y \mid x^*]} dy\\
&+ \frac{\rho_0(x)\rho_1(z)}{\tLambda_h(x) \tLambda_h(z_*)} e^{-2m/h} \int_{AB\cap \support(\rho_1)} \frac{1}{ \rho_1^h(y)} dy.
\end{split}
\]
By known asymptotics of $\tLambda$ and the pointwise limit of $\rho_1^h$ proved in Lemma \ref{lem:partitionfnasymp}, for all small enough $h$, the above is bounded by $C \sqrt{h} e^{-m/h}$, for a.e. $(x,z)\in \support(\rho_0) \times \support(\rho_1)$.  

Now, consider the case when $\norm{x^* - z}< 2\delta$. In the integral involving $y$, by the previous argument, we only need to consider the case when $\max\left(\norm{y -x^*}, \norm{y-z} \right)< \delta$ since the rest is again exponentially small. In that domain, we use \eqref{eq:remain1} to replace the divergence by its quadratic approximation. Also in that domain, by choosing a smaller $\delta$, if necessary, one can guarantee that for all small enough $h$, 
\[
\frac{\rho_1(z)}{\rho_1^h(y)}=\frac{\rho_1(y)}{\rho_1^h(y)} \frac{\rho_1(z)}{\rho_1(y)} < (1+\epsilon), \quad \text{and}\quad \norm{A^{-1}(x^*)A(z) - I} \le \epsilon. 
\]
In particular, 
\[
\begin{split}
D[y \mid x^*] &+ D[y \mid z] \ge \frac{1}{2} (y-x^*)^T A(x^*) (y -x^*) +  \frac{1}{2} (y-z)^T A(z) (y -z) \\
&- \epsilon \norm{y-z}^2 - \epsilon \norm{y-x^*}^2\\
&\ge \frac{1}{2} (y-x^*)^T A(x^*) (y -x^*) +  \frac{1}{2} (y-z)^T A(x^*) (y -z) - 3\epsilon \delta^2.
\end{split}
\]

Hence, 
\[
\begin{split}
&f_{2h}(x, z) - C \sqrt{h} e^{-m/h}\le \frac{\rho_0(x)}{ \tLambda_h(x) \tLambda_h(z_*)} (1+\epsilon) \int_{B_\delta(x^*) \cap B_\delta(z)}  e^{- \frac{1}{h} D[y \mid x^*] - \frac{1}{h} D[y \mid z]} dy \\
&\le (1+\epsilon) \frac{\rho_0(x)}{ \tLambda_h(x) \tLambda_h(z_*)} e^{-3\epsilon \delta^2} \int_{B_\delta(x^*) \cap B_\delta(z)} e^{- \frac{1}{2h} (y-x^*)^T A(x^*) (y -x^*) -  \frac{1}{2h} (y-z)^T A(x^*) (y -z)} dy \\
&\le (1+\epsilon) e^{-3\epsilon \delta^2} \frac{\rho_0(x)}{ \tLambda_h(x) \tLambda_h(z_*)} e^{-3\epsilon \delta^2} \int_{\rr^d} e^{- \frac{1}{2h} (y-x^*)^T A(x^*) (y -x^*) -  \frac{1}{2h} (y-z)^T A(x^*) (y -z)} dy. 
\end{split}
\]
The integral inside is a Gaussian integral and is best dealt best by the probability argument outlined above. Suppose $(Y', Z')$ is jointly Gaussian where $Y'\sim N(x^*, hA^{-1}(x^*))$ and $Z'$, given $Y'=y$, is $N(y, hA^{-1}(x^*))$. Then, the distribution of $Z$ is $N(x^*, 2hA^{-1}(x^*))$. 

Hence, 
\[
\begin{split}
\frac{ \det A(x^*)}{(2\pi h)^d} \int_{\rr^d} & e^{- \frac{1}{2h} (y-x^*)^T A(x^*) (y -x^*) -  \frac{1}{2h} (y-z)^T A(x^*) (y -z)} dy\\
&= \frac{\det A^{1/2}(x^*)}{(4\pi h)^{d/2}} e^{-\frac{1}{4h} (z-x^*)^T A(x^*) (z-x^*)}. 
\end{split}
\]

Therefore, 
\[
\begin{split}
&f_{2h}(x, z) - C \sqrt{h} e^{-m/h}\le (1+\epsilon) e^{-3\epsilon \delta^2} \frac{\rho_0(x)}{ \tLambda_h(x) \tLambda_h(z_*)} e^{-3\epsilon \delta^2}\\
&\times \frac{(2\pi h)^d} { \det A(x^*)}\frac{\det A^{1/2}(x^*)}{(4\pi h)^{d/2}} e^{-\frac{1}{4h} (z-x^*)^T A(x^*) (z-x^*)}. 
\end{split}
\]
Now, for all small enough $h$, we can (i) absorb the $O(\sqrt{h} e^{-m/h})$ term with an extra $O(\epsilon)$ term, (ii) lower bound both $\tLambda_h(x)$ and $\tLambda_h(z_*) \det A^{1/2}(x^*)$ by $(1-\epsilon)(\sqrt{2\pi h})^d$ to get 
\[
\begin{split}
f_{2h}(x, z) &\le (1+ 2\epsilon) (1-\epsilon) e^{-3\epsilon \delta^2} \rho_0(x) \frac{1}{ \tLambda_h(x)} 2^{-d/2} e^{-\frac{1}{4h} (z-x^*)^T A(x^*) (z-x^*)}\\
&\le (1+ 2\epsilon) (1-\epsilon)^2 e^{-3\epsilon \delta^2} \rho_0(x) \frac{\det A^{1/2} (x^*)}{(4\pi h)^{d/2}} e^{-\frac{1}{4h} (z-x^*)^T A(x^*) (z-x^*)}\\
&\le (1+ 3\epsilon) \rho_0(x) \frac{\det A^{1/2} (x^*)}{(4\pi h)^{d/2}} e^{-\frac{1}{4h} (z-x^*)^T A(x^*) (z-x^*)},
\end{split}
\]
by choosing $\epsilon, h$ small enough. 

Hence, for a.e. $x\in \support(\rho_0)$ and $z\in \support(\rho_1)$, we get the estimate
\eq\label{eq:f2hest}
f_{2h}(x, z) \le\begin{cases}
(1+ 3\epsilon) \rho_0(x) \tq_{2h}(x, z), & \norm{x-z^*} \le 2\delta,\\
C \sqrt{h} \rho_0(x) e^{-m/h}, & \norm{x-z^*} > 2\delta. 
\end{cases}
\en
In particular, from \eqref{eq:remain1},
\[
\frac{f_{2h}(x,z)}{\tmu_{2h}(x,z)} \le \begin{cases}
(1+3\epsilon) \frac{\det A^{1/2} (x^*)\tLambda_{2h}(x)}{(4\pi h)^{d/2} } e^{\frac{1}{2h} R[z \mid x^*]}, & \norm{x-z^*} \le 2\delta,\\
C \sqrt{h} \tLambda_{2h}(x) e^{\frac{1}{2h} D[z \mid x^*] - \frac{m}{h}}, & \norm{x-z^*} > 2\delta.
\end{cases}
\]
For all small enough $h$, $(1+3\epsilon) \frac{\det A^{1/2} (x^*)\tLambda_{2h}(x)}{(4\pi h)^{d/2} } \le (1+4\epsilon) \le e^{4\epsilon}$.
Moreover, $D[z \mid x^*]$ is bounded when $z\in \support(\rho_1)$ and $x\in \support(\rho_0)$.
Thus,
\[
\begin{split}
H\left( f_{2h} \mid \tmu_{2h} \right)&= \int \log \left( \frac{f_{2h}(x,z)}{\tmu_{2h}(x,z)} \right) dzdx\\
&\le 4\epsilon + \frac{1}{2h}\int_{\norm{x-z^*} < 2\delta} R[z \mid x^*] f_{2h}(x,z) dzdx \\
&+ C' \frac{\log h}{h} \int_{\norm{x-z^*} \ge 2\delta} f_{2h}(x,z)dzdx.
\end{split}
\]
Using the fact that $R[z \mid x^*] \le \epsilon \norm{z-x^*}^2$ and the estimate \eqref{eq:f2hest}, we get 
\[
H\left( f_{2h} \mid \tmu_{2h} \right) \le 4 \epsilon + O(\epsilon) + C' \frac{\log h}{h} e^{-m/h}. 
\] 
Hence, $\limsup_{h\rightarrow 0+}H\left( f_{2h} \mid \tmu_{2h} \right)=O(\epsilon)$ showing \eqref{eq:limsupinf}. 
\end{proof}

\section{Dirichlet transport and the proof of Theorem \ref{thm:mainthm2}}\label{sec:Dirichlet} 

It is clear that Theorem \ref{thm:mainthmw} can be generalized to cost functions satisfying the Twist Condition for which $c(x, \cdot)$ is locally quadratic at $x$. However, one drawback of that result is that it is coordinate dependent. That is, if we consider a diffeomorphism $\mathbb{S}:\rr^d \rightarrow \rr^d$ and change the cost function to $c\left( \mathbb{S}(x), \mathbb{S}(y)\right)$, then we would get different limits in Theorem \ref{thm:mainthmw}. This section, however, shows that (i) some diffeomorphisms are more natural for the problem, and (ii) gives us strikingly clean limit functions.

Recall the cost function on the unit simplex $\Delta_n$ given in \eqref{eq:costDirichlet}:
\[
c(p,q)= \log\left( \frac{1}{n} \sum_{i=1}^n \frac{q_i}{p_i} \right) -\frac{1}{n} \sum_{i=1}^n \log \frac{q_i}{p_i}, \quad p,q \in \Delta_n. 
\]

The so-called exponential coordinate system of the unit simplex refers to the following map from $\Delta_n \rightarrow \rr^{n-1}$:
\[
\mathbf{\Theta}(p)=\log\left(p_i/p_n \right), \quad i=1,2,\ldots, n-1. 
\]
The inverse of this map takes $(\theta_1, \ldots, \theta_{n-1} )\in \rr^{n-1}$ to an element $p \in \Delta_n$, where
\[
p_i= \frac{e^{\theta_i}}{1+ \sum_{j=1}^{n-1} e^{\theta_j}},\; i=1,2,\ldots, n-1, \quad p_n=\frac{1}{1+\sum_{j=1}^{n-1} e^{\theta_j}}.
\]
If $\mathbf{\Theta}(p)=\theta=(\theta_1, \ldots, \theta_{n-1})$ and $\mathbf{\Theta}(q)=\rho=(\rho_1, \ldots, \rho_{n-1})$ are the respective exponential coordinates of $p$ and $q$, one can express $c(p,q)$ as
\[
\widetilde{c}(\theta, \rho)= \log\left(\frac{1}{n} + \frac{1}{n} \sum_{i=1}^{n-1} e^{\rho_i- \theta_i}\right) - \frac{1}{n} \sum_{i=1}^{n-1} (\rho_i-\theta_i)=g(\theta-\rho),
\]
where $g$ is indeed a strictly convex cost function. For a detailed analysis using this coordinate system see \cite{PW16}. 

Thus, Theorem \ref{thm:mainthmw} covers the limit for this cost function. However, the exponential coordinate system is not the most natural coordinate system for this transport problem and the limit we get from applying Theorem \ref{thm:mainthmw} is not illuminating. 
The multiplicative group $\odot$ appears to give it more natural properties, especially related to the behavior of entropy. We start by describing the Monge solutions.


A function $\varphi:\Delta_n \rightarrow [-\infty, \infty)$ is called exponentially concave if $e^{\varphi}$ is a (nonnegative) concave function on $\Delta_n$. Hence $\varphi$ has directional derivatives everywhere it is finite, and, indeed, is differentiable Lebesgue almost everywhere on that domain. 

\begin{defn}[Portfolio map]
Let $\varphi$ be exponentially concave on $\Delta_n$. Define $\boldsymbol{\pi}(r) \in \overline{\Delta}_n$ by
\begin{equation} \label{eqn:portfolio.map}
(\boldsymbol{\pi}(r))_i = r_i \left(1 + \nabla_{e_i - r} \varphi(r)\right), \quad i = 1, \ldots, n,
\end{equation}
where $\{e_1, \ldots, e_n\}$ is the standard basis of $\mathbb{R}^n$ and $\nabla_{e_i - r}$ is the directional derivative in the direction $e_i-r$. We call $\boldsymbol{\pi}$ the portfolio map generated by $\varphi$.
\end{defn}

The fact that the range of $\boldsymbol{\pi}$ is contained in $\Delta_n$ has been first observed by Fernholz in the context of stochastic portfolio theory (hence the name). See \cite{F02, PW16}. The map $\boldsymbol{\pi}(r)$ is the multiplicative equivalent of the additive map $x\mapsto x - \nabla \phi(x)$, for a convex function $\phi$, for the quadratic Wasserstein transport problem.

Consider two probability densities $\rho_0, \rho_1$ that are compactly supported in $\Delta_n$. Consider the optimal transport from $\rho_0$ to $\rho_1$ with cost $c$. There exists a unique solution to the Monge problem whose solution can be described in terms of the portfolio map of an exponentially concave function. The following is taken from \cite[Theorem 4 and eqn. (38)]{PW18}.

There exists an exponentially concave function $\varphi: \Delta_n \rightarrow \mathbb{R}$ such that the following statements hold.
\begin{itemize}
\item[(i)] If $\boldsymbol{\pi}$ is the portfolio map generated by $\varphi$, the mapping $p \mapsto T(p)=q$ where
\begin{equation} \label{eqn:deterministic.transport}
q\odot p^{-1} := \boldsymbol{\pi}(p^{-1}),
\end{equation}
is $\rho_0$ a.e.~defined and pushforwards $\rho_0$ to $\rho_1$.
\item[(ii)] The deterministic coupling $(p,T(p))$ is the optimal Monge solution and is $\rho_0$-a.e.~unique.
\end{itemize}

However, we are more interested in the dual transport map from $\rho_1$ to $\rho_0$. To derive this easily, note that $c(p,q)=c(q^{-1}, p^{-1})$ from straightforward algebra. Let $\rho_0^-$ (respectively, $\rho_1^-$) denote the push-forward of $\rho_0$ (respectively, $\rho_1$) by the map $p \mapsto p^{-1}$. Consider the transport with cost $c$ from $\rho_1^-$ to $\rho_0^-$. By the previous assertions, there is an exponentially concave function $\varphi$ that gives the Monge solution. Let $T$ and $\boldsymbol{\pi}$ denote the corresponding Monge map and the portfolio map. Replacing $p$ to $q^{-1}$ and $q$ to $p^{-1}$, we see that there exists a map $q\mapsto p= q_*=\left(T(q^{-1})\right)^{-1}$ that transports $\rho_1$ to $\rho_0$ and such that $q\odot p^{-1} = \boldsymbol{\pi}(q)$. We will call this map $\bpi$ the dual portfolio map. To drive home the similarity with the notation in the previous section, let $p \mapsto p^*=\left(T^{-1}(p^{-1})\right)^-$ denote the inverse map.

By Alexandrov's theorem, $\varphi$ is twice differentiable almost everywhere on its domain. By exponential concavity, the following matrix  then becomes nonpositive definite: 
\eq\label{eq:ldivmatrix}
\tnabla^2 \varphi(p) = \nabla^2 \varphi(p) + \nabla \varphi(p) \nabla^T\varphi(p) \le 0. 
\en

The divergence for an exponentially concave function is called the $L$-divergence and was originally defined in \cite{PW14}. More detailed studies on the geometry of $L$-divergence can be found in \cite{W15, PW16, W17}. The following definition is equivalent to Definition \ref{defn:cdiv} by passing to exponential coordinates (see \cite{PW16}) but avoids developing a duality theory for exponentially concave functions. 

\begin{defn} [$L$-divergence] \label{def:L.divergence}
Let $\varphi$ be a differentiable exponentially concave function on $\Delta_n$. The $L$-divergence of $\varphi$ is a function from $\Delta_n \times \Delta_n$ to $[0, \infty]$ given by the following expression. For any two $q, r$, let
\begin{equation} \label{eqn:L.divergence}
{\bf L}\left[ r \mid q \right] =  \log \left( \sum_{i=1}^n \bpi_i(q) \frac{r_i}{q_i} \right) - \left( \varphi(r) - \varphi(q)\right).
\end{equation}
\end{defn}

By the exponential concavity of $\varphi$, it can be shown that ${\bf L}\left[ r \mid r' \right] \geq 0$ and ${\bf L}\left[ r \mid r \right] = 0$. If $e^{\varphi}$ is strictly concave, then ${\bf L}\left[ r \mid r' \right] > 0$ for all $r \neq r'$. As $r \rightarrow q$, we can now consider a Taylor approximation to $L$-divergence. More details about the following calculations can be found in \cite[Section 4.2]{PW18}. Let ${L}(q): = -\wt{\nabla}^2 \varphi(q)$ from \eqref{eq:ldivmatrix} denote a positive semidefinite matrix. Then, under appropriate smoothness, ${\bf L}\left[ r \mid q \right] \approx \frac{1}{2} (r-q)^T L(q) (r-q)$. 

We make the following assumptions that mirror Assumptions \ref{asmp:density} and \ref{asmp:continuity}. 

\begin{asmp}\label{asmp:densityc} Assume that $\rho_0, \rho_1$ are continuous and compactly supported in $\Delta_n$ and $\rho_1$ is bounded away from zero on its support. Further assume that there is angle $\theta > 0$ and a constant $\delta > 0$ such that, for any $y \in \support(\rho_1)$, there exists $w \neq 0$ (depending on $y$), such that the shifted cone $y + K(w, \theta)$ intersection with $B_\delta(y)$ lies completely inside $\support(\rho_1)$. Finally, assume that the Lebesgue measure of the boundary of $\support(\rho_1)$ is $0$.
\end{asmp}

\begin{asmp}\label{asmp:continuityc}
Assume that for any $\delta >0$, 
\eq\label{eq:divsublev}
\inf_{z\in \support(\rho_1)} \inf_{y \in B^c_\delta(z)} \ldiv[y \mid z] >0,
\en   
where $B^c_\delta(z)$ is the complement of the open Euclidean ball around $z$ of radius $\delta$. Assume also that the following quadratic approximation to the divergence holds uniformly on $\support_0(\rho_1)$. For any $\epsilon >0$, there is a $\delta>0$ such that for all $z \in \support_0(\rho_1)$ and all $y\in \support(\rho_1)$ such that $\norm{y-z}< \delta$, we have  
\eq\label{eq:ldivquad}
R[r\mid r'] := \ldiv[r\mid r']- \frac{1}{2}({r}-{r'})^T L(r') ({r}-{r'}) = o\left( \abs{{r}-{r'}}^2\right), \quad \text{as} \; r\rightarrow r',
\en
for some family of matrices $L(\cdot)$ that is continuous a.s. on $\support_0(\rho_1)$. The family $L(\cdot)$ is assumed to be uniformly elliptic in the sense that there exists an $\varepsilon>0$ such that the eigenvalues of $L(r)$ lie in $(\varepsilon, 1/\varepsilon)$ for all $r \in \support_0(\rho_1)$. In particular, the family of inverses $L^{-1}(z)$, $z\in \support_0(\rho_1)$, exists and is continuous almost surely.
\end{asmp}

Now, suppose $q=p^*$ is given by a Monge map with the dual portfolio map $\pi=\bpi(q)=q\odot p^{-1}$. Then, for any other $r \in \Delta_n$, 
\[
\log\left( \frac{1}{n} \sum_{i=1}^n \frac{r_i}{p_i}\right) - \log\left( \frac{1}{n} \sum_{i=1}^n \frac{q_i}{p_i}\right)= \log\left( \sum_{i=1}^n \pi_i \frac{r_i}{q_i} \right).
\]
Hence, assuming that $\boldsymbol{\pi}$ is the dual portfolio map, if $q=p^*$ we can rewrite \eqref{eqn:L.divergence} in the alternative form
\begin{equation}  \label{eqn:L.divergence.portfolio}
\begin{split}
{\bf L}\left[ r \mid p^* \right] &= \log\left( \frac{1}{n}\sum_{i=1}^n \frac{r_i}{p_i} \right) - \log \left( \frac{1}{n}\sum_{i=1}^n \frac{p^*_i}{p_i} \right)  - \left( \varphi(r) - \varphi(p^*)\right)\\
&= c(p,r) - c(p, p^*) + \frac{1}{n} \sum_{i=1}^n \log \frac{r_i}{p_i^*}  - \left( \varphi(r) - \varphi(p^*)\right).
\end{split}
\end{equation}
Hence, 
\eq\label{eq:feepq}
\begin{split}
c(p,r) - \ldiv[r \mid p^*] = c(p, p^*) +  \frac{1}{n} \sum_{i=1}^n \log p^*_i - \frac{1}{n} \sum_{i=1}^n \log r_i + \varphi(q) - \varphi(p^*).
\end{split}
\en

\subsection{The Dirichlet transport} The global coordinate system we use on $\Delta_n$ is given by the first $(n-1)$ coordinates of $p\in \Delta_n$. It will be convenient to have a new notation:
\[
\tp = (p_1, \ldots, p_{n-1}),\quad \text{or},\; p_i= \tp_i,\; \text{for $i=1,2,\ldots, n-1$ and}\; p_n = 1 - \sum_{i=1}^{n-1} \tp_i. 
\]
In the calculations below, all integrals are performed with respect to the Lebesgue measure on the set 
\[
\left\{\tp=(p_1, \ldots, p_{n-1}),\; p_i > 0, \forall \; i,\; \sum_{i=1}^{n-1} p_i < 1 \right\}.
\]

Recall the symmetric Dirichlet distribution $\Diri(\lambda)$ from the Introduction \eqref{eq:diridensity} and the density $\nu_0$ of $\Diri(0)$. Fix $p \in \Delta_n$, generate $G \sim \Diri(\lambda)$ and consider the random variable $p \odot G$. The density of the random variable (see \cite{PW18}) at any $q\in \Delta_n$ is 
\[
\begin{split}
F_\lambda(p, q) &= \frac{\Gamma(\lambda)}{\left(\Gamma(\lambda/n)\right)^n} \frac{1}{\prod_{i=1}^n p_i} \prod_{i=1}^n \left( \frac{q_i}{p_i}\right)^{\frac{\lambda}{n}-1} \left( \sum_{i=1}^n \frac{q_i}{p_i}\right)^{-\lambda}\\
&= \frac{\Gamma(\lambda)n^{-\lambda}}{\left(\Gamma(\lambda/n)\right)^n} \nu_0(q) \exp\left( -\lambda c(p,q)\right).
\end{split}
\]

We start by carefully analyzing the normalizing constant in the above density as $\lambda \rightarrow \infty$. By Stirling approximation to the gamma function
\eq\label{eq:stirling}
\begin{split}
& \frac{\Gamma(\lambda)n^{-\lambda}}{\left(\Gamma(\lambda/n)\right)^n} \sim  \sqrt{\frac{2\pi}{ \lambda}} \left( \frac{\lambda}{e}\right)^\lambda \frac{1}{n^\lambda} \left(\sqrt{\frac{\lambda}{2\pi n}}\right)^n \left( \frac{n e}{\lambda}\right)^\lambda \\
& = (2\pi)^{-(n-1)/2} \lambda^{(n-1)/2} n^{-n/2} = n^{-n/2}\left(\frac{\lambda}{2\pi}\right)^{(n-1)/2}.
\end{split}
\en
Here $\sim$ means that the ratio of the two terms converges to one as $\lambda \rightarrow \infty$. 

This gives us a transition probability on $\Delta_n$ such that $\lim_{\lambda \rightarrow \infty} -\frac{1}{\lambda} \log F_\lambda(q\mid p) = c(p,q)$. Thus $h:=\frac{1}{\lambda}$ is a measure of noise, and as $h \rightarrow 0+$, we recover the optimal transport with cost $c$. 
In particular, from \eqref{eq:stirling} notice that
\[
F_\lambda(p,q) = \frac{\Gamma(\lambda)n^{-\lambda}}{\left(\Gamma(\lambda/n)\right)^n} \nu_0(q) e^{-\lambda c(p,q)} \sim \frac{n^{-n/2} }{(2\pi h)^{(n-1)/2}} \nu_0(q) e^{-\frac{1}{h} c(p,q)}, \quad \text{for large $\lambda$},
\]

The proof of Theorem \ref{thm:mainthm2} is very similar to that Theorem \ref{thm:mainthmw}. We will reuse similar notations to stress this point and skip similar steps in the proof. First we replace the transition density $F_\lambda$ by 
\eq\label{eq:stirlingfh}
f_h(p,q)= F_{1/h}(p,q) = \frac{\Gamma(1/h)n^{-1/h}}{\left(\Gamma(1/nh)\right)^n} \nu_0(q) e^{-\frac{1}{h} c(p,q)} \sim \frac{n^{-n/2}}{\left({2\pi h}\right)^{(n-1)/2}}\nu_0(q) e^{-\frac{1}{h} c(p,q)}
\en 
This gives us a joint density $\mu_h(p,q)=\rho_0(p)f_h(p,q)$.

We will now define an exponential tilting of the transition probability $f_h$ that penalizes based on L-divergence from the Monge map. 
Let $\Phi(p,q):= c(p,q) - \ldiv[q \mid p^*]$. 
Thus, from \eqref{eq:feepq}, 
\eq\label{eq:whatisphipq}
\Phi(p,q)=c(p, p^*) +  \frac{1}{n} \sum_{i=1}^n \log p^*_i - \varphi(p^*) - \frac{1}{n} \sum_{i=1}^n \log q_i + \varphi(q).
\en
As before, because $\rho_0$ and $\rho_1$ are compactly supported on $\Delta_n$, we will assume without loss of generality that $c(p,q)=\infty$ outside a compact subset of $\Delta_n$. Also assume that the divergence is infinity unless $q\in \support(\rho_1)$ and $p\in \support(\rho_0)$.
Hence, all integrals below are finite. 

The normalizing function
\[
Z_h(p) := \frac{1}{\nu_0(p^*)} \int_{\Delta_n} \exp\left( -\frac{1}{h} {\bf L}[q \mid p^*] \right) \nu_0(q) dq,
\]
defines an exponential tilting of the transition density $f_h$ on $\Delta_n$:
\[
\begin{split}
\tf_h(p, q) &=\frac{1}{Z_h(p) \nu_0(p^*)} \exp\left[ \frac{1}{h} \Phi(p,q) \right] f_{h}(p, q)\\
&= \frac{\Gamma(1/h)n^{-1/h}}{(\Gamma(1/nh))^n}\frac{1}{Z_h(p)} \frac{\nu_0(q)}{\nu_0(p^*)}  \exp\left[ -\frac{1}{h} c(p,q)+\frac{1}{h} \Phi(p,q)\right]\\
&= \frac{\Gamma(1/h)n^{-1/h}}{(\Gamma(1/nh))^n}\frac{1}{Z_h(p)} \frac{\nu_0(q)}{\nu_0(p^*)} \exp\left( -\frac{1}{h} \ldiv[q \mid p^*] \right).
\end{split}
\] 

Observe that, for $h\approx 0$, $\exp\left( -\frac{1}{h} \ldiv[q\mid p^*] \right)$ is very small, unless $q \approx p^*$. In the latter case, we have the Gaussian approximation from \eqref{eq:ldivquad} and \eqref{eq:stirlingfh}:
\[
\frac{\Gamma(1/h)n^{-1/h}}{(\Gamma(1/nh))^n} \exp\left[ -\frac{1}{h} {\bf L}[q \mid p^*] \right] \approx \frac{n^{-n/2}}{(2\pi h)^{(n-1)/2}}\exp\left(  -\frac{1}{2h}({q} - {p^*})^T {L}({p^*}) ({q} - {p^*}) \right).
\] 
Now, consider the $n \times (n-1)$ matrix $S$ such that $q-p= S\left( \tq - \tp\right)$. That is, 
\[
S=\begin{bmatrix}
I_{(n-1) \times (n-1)} \\
-\mathbf{1}_{n-1}
\end{bmatrix},
\]
where $I_{(n-1) \times (n-1)}$ is the $(n-1) \times (n-1)$ identity matrix and $\mathbf{1}_{n-1}$ is the row vector of all ones of length $(n-1)$. Thus 
\[
({q} - {p^*})^T {L}({p^*}) ({q} - {p^*}) = (\wt{q} - \wt{p^*})^T S^T{L}({p^*})S (\wt{q} - \wt{p^*}) =  (\wt{q} - \wt{p^*})^T \wt{L}({p^*}) (\wt{q} - \wt{p^*}), 
\]
where, obviously $\wt{L}(\cdot)= S^T L(\cdot)S$ is still nonnegative definite.

Suppose we forget that we are on the simplex, and consider the measure with the unnormalized density as above, it gives us a Gaussian distribution with mean $\wt{p^*}$ and a covariance matrix $A$ such that $A^{-1}=h \wt{L}({p^*})$. Because $\rho_0$ is compactly supported in $\Delta_n$, there is some $\delta >0$ such that $B(p^*, \delta) \subset \Delta_n$. Hence, for small values of $h$, $\nu_0(q)  \approx \nu_0(p^*)$ since $q\in B(p, \delta)$ with exponentially high probability, and we get that $\tf_h$ is approximately the Gaussian distribution
\[
q_h(p, q)  = \frac{\abs{J}^{1/2}(p^*)}{ (2\pi h)^{(n-1)/2}} \exp\left( - \frac{1}{2h}(\wt{q} - \wt{p^*})^T \wt{L}({p^*}) (\wt{q} - \wt{p^*}) \right).
\]
where $\abs{J}(p^*)$ is the determinant of $\wt{L}({p^*})$. Thus
\[
Z_h(p) \sim \frac{n^{-n/2}}{\abs{J}^{1/2}(p^*)}, \; \text{as $h\rightarrow 0+$}.
\]

The following lemma mirrors Lemma \ref{lem:partitionfnasymp}.

\begin{lemma}\label{lem:exptilting2} Under Assumptions \ref{asmp:densityc} and \ref{asmp:continuityc}, the following statements hold.  
\begin{enumerate}[(i)]
\item The normalizing constant $Z_{h}(p)$ has the following limit a.e. on $\support(\rho_0)$.
\[
\lim_{h \rightarrow 0+}  Z_{h}(p) \sqrt{\abs{J}(p^*)}=n^{-n/2}.
\]
\item Moreover,
\[
\begin{split}
\lim_{h\rightarrow 0+}   -\int \log Z_h(p) \rho_0(p) dp &= \frac{1}{2}\left( \Ent_0(\rho_1) - \Ent_0(\rho_0) \right)\\
& - \frac{n}{2} \cost(\rho_0, \rho_1) +  \int \log \nu_0(q) \rho_1(q)dq .
\end{split}
\]
\item Finally, let $\rho_1^h$ denote the law of $Y$ under the joint distribution $\tmu_h=\rho_0(p)\wt{f}_{h}(p,q)$, then, $\rho_1^h$ converges pointwise to $\rho_1$ as $h \rightarrow 0+$, a.e. on $\support(\rho_1)$.
\end{enumerate}
\end{lemma}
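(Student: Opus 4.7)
The plan is to follow closely the three-part structure of Lemma \ref{lem:partitionfnasymp}, with the simplex Haar measure $\nu_0$ replacing the ambient Lebesgue measure and with the Stirling prefactor from \eqref{eq:stirlingfh} playing the role that $\Lambda_h^{-1}$ played in the previous section. Throughout, the simplex constraint is handled by the $(n-1)$-dimensional coordinate $\wt{q}$ and the change-of-variables matrix $S$, so the relevant Hessian is the effective one $\wt{L}(q)=S^{T} L(q)\,S$, whose determinant is precisely $|J|(q)$.

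For part (i), I would run a Laplace asymptotic on the integral defining $Z_h(p)$ with the Stirling factor $\frac{\Gamma(1/h)n^{-1/h}}{(\Gamma(1/nh))^n}\sim n^{-n/2}(2\pi h)^{-(n-1)/2}$ folded in so the analysis becomes structurally parallel to Lemma \ref{lem:partitionfnasymp}(i)--(ii). Fix $p$ with $p^{*}\in \support_{0}(\rho_{1})$ and choose $\delta>0$ so small that $B_\delta(p^*)\subseteq \interior(\support(\rho_1))$. Split the integral into $B_\delta(p^*)$ and its complement; on the complement, the uniform divergence bound \eqref{eq:divsublev} supplies an $O(e^{-m/h})$ estimate. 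On the ball, apply the quadratic expansion \eqref{eq:ldivquad} together with continuity of $\nu_0$ near $p^*$ to replace the integrand by a Gaussian of covariance $h\wt{L}^{-1}(p^*)$ in the $\wt{q}$-coordinates; a two-sided squeeze exactly as in Lemma \ref{lem:partitionfnasymp}(i) yields the Gaussian integral $(2\pi h)^{(n-1)/2}/\sqrt{|J|(p^*)}$. Combined with the Stirling factor above this produces the claimed limit $n^{-n/2}/\sqrt{|J|(p^*)}$. The uniform ellipticity assumption on $L$ together with the cone condition of Assumption \ref{asmp:densityc} supplies uniform two-sided bounds, so that dominated convergence can be invoked in (ii).

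For part (ii), take logarithms in (i) and pass to the limit by dominated convergence to obtain
\[
-\int \log Z_h(p)\,\rho_0(p)\,dp \;\longrightarrow\; \tfrac{n}{2}\log n + \tfrac{1}{2}\int \log |J|(p^{*})\,\rho_0(p)\,dp.
\]
Push the second integral forward via the Monge map $p\mapsto q=p^{*}$, which sends $\rho_0$ to $\rho_1$, to rewrite it as $\tfrac{1}{2}\int \log |J|(q)\,\rho_1(q)\,dq$. The residual task, which is the main analytical obstacle of the lemma, is to derive the Jacobian identity
\[
\int \log |J|(q)\,\rho_1(q)\,dq = \Ent_0(\rho_1)-\Ent_0(\rho_0) - n\,\cost(\rho_0,\rho_1) + 2\int \log \nu_0(q)\,\rho_1(q)\,dq - n\log n.
\]
This I would obtain by combining the push-forward relation $\rho_1(q)\,|\det DT^{-1}(q)| = \rho_0(T^{-1}(q))$ with a second-order differentiation of the Kantorovich factorization \eqref{eq:whatisphipq} at $r=q$; that identifies $\wt{L}(q)$ with the effective Hessian of $\varphi(q)+\tfrac{1}{n}\sum_i\log q_i$, modulo the rank-one piece coming from $\nabla\varphi\,\nabla^{T}\!\varphi$ that contributes a logarithmic Jacobian of the dual portfolio map $\bpi$. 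The $-\tfrac{n}{2}\cost(\rho_0,\rho_1)$ contribution arises from the $c(p,p^{*})$ summand in \eqref{eq:whatisphipq} integrated against $\rho_0$, and the $\log \nu_0$ contribution arises from the $-\tfrac{1}{n}\sum_i\log q_i = \tfrac{1}{n}\log \nu_0(q)$ summand. This algebraic bookkeeping is the heart of the proof.

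For part (iii), I would follow Lemma \ref{lem:partitionfnasymp}(iv) closely: write $\rho_1^{h}(q)=\int \rho_0(p)\,\tf_h(p,q)\,dp$, change variables $p\mapsto z=p^{*}$ so $\rho_0(p)\,dp$ becomes $\rho_1(z)\,dz$, subtract $\rho_1(q)$, and split the resulting integral into $B_\delta(q)$ and its complement. On $B_\delta(q)$ the quadratic expansion \eqref{eq:ldivquad} makes the kernel approximately Gaussian at $z=q$, and continuity of $\rho_1$ together with the normalizing asymptotic from (i) supplies the desired cancellation; on the complement, \eqref{eq:divsublev} gives the needed exponential decay. The cone condition of Assumption \ref{asmp:densityc} and the assumed lower bound on $\rho_1$ further yield a two-sided bound on $\rho_1^{h}/\rho_1$ uniform in $h$ on $\support_0(\rho_1)$, mirroring \eqref{eq:densityratiobound}; this uniform bound is exactly what will be needed when the lemma is fed into the proof of Theorem \ref{thm:mainthm2} in the same way that Lemma \ref{lem:partitionfnasymp}(iv) was used in the proof of Theorem \ref{thm:mainthmw}.
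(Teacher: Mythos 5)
Your overall architecture matches the paper's: parts (i) and (iii) are indeed handled by repeating the Laplace/Gaussian squeeze of Lemma \ref{lem:partitionfnasymp} (ii) and (iv) in the $\wt q$-coordinates, with the Stirling prefactor \eqref{eq:stirlingfh} playing the role of $\Lambda_h^{-1}$ and $\wt L=S^TLS$ the role of $A$; the paper does exactly (and only) what you describe there. Part (ii) also begins as you say, reducing everything by dominated convergence and the push-forward $p\mapsto p^*$ to the single identity
\[
\int \log \abs{J}(q)\,\rho_1(q)\,dq \;=\; \Ent_0(\rho_1)-\Ent_0(\rho_0) - n\log n - n\,\cost(\rho_0,\rho_1) + 2\int \log\nu_0(q)\,\rho_1(q)\,dq,
\]
which is the same identity the paper needs.

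The gap is that you do not establish this identity, and the route you sketch for it is not correct as stated. The paper does not derive it from scratch: it imports it from \cite[Theorem 16, Sections 4.2--4.3]{PW18}, applied to the transport of $\rho_1^-$ to $\rho_0^-$, and then only has to undo the inversion $p\mapsto p^{-1}$ via the Jacobian formula \eqref{eq:PW1872} (which is where the two $\log\nu_0$ terms get reshuffled and $\Ent_0(\rho_i^-)=\Ent_0(\rho_i)$ is checked). Your sketch identifies $\wt L(q)$ with ``the effective Hessian of $\varphi(q)+\frac1n\sum_i\log q_i$,'' but by \eqref{eq:ldivmatrix} and Definition \ref{def:L.divergence}, $L(q)=-(\nabla^2\varphi(q)+\nabla\varphi(q)\nabla^T\varphi(q))$ is the Hessian-type object of the $L$-divergence itself; the $\frac1n\sum_i\log q_i$ term belongs to $\Phi(p,\cdot)=c(p,\cdot)-\ldiv[\cdot\mid p^*]$, not to $\ldiv$. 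More importantly, the contribution $-n\,\cost(\rho_0,\rho_1)-n\log n$ does not come from integrating the $c(p,p^*)$ summand of \eqref{eq:whatisphipq}; it enters through the Monge--Amp\`ere determinant for this transport, in which the factor $\prod_i\bpi_i(q)$ appears and one uses $\sum_i\log\bpi_i(q)=-n\,c(p,p^*)-n\log n$. Carrying out that Monge--Amp\`ere computation is precisely the content of \cite{PW18} that the paper leans on; as written, your ``algebraic bookkeeping'' is the missing proof rather than a proof.
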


\begin{proof} The proofs of (i) and (iii) are almost identical to that of Lemma \ref{lem:partitionfnasymp} (ii) and (iv) and have already been outlined above. We skip the details. 

For the proof of (ii) we use \cite[Section 4.2, 4.3]{PW18}, especially the proof of Theorem 16. The rest of the argument alludes to the notation used in that reference. However, in order to use this in our case, we will need to unpack the notations.

Consider the probability measures $\rho_0^-$ and $\rho_1^-$ from the paragraph below \eqref{eqn:deterministic.transport} and consider the problem of transporting $\rho_1^-$ to $\rho_0^-$ with cost $c$. Then (see \cite{PW18}, Section 4.3, in Proof of Theorem 16 (put $t=1$, $P_0=\rho_1^-$, $P_1=\rho_0^-$, $\Ent_{\mu_0}$ to $\Ent_0$ and change the notation for the Haar measures from $\mu_0$ to $\nu_0$ and recall $r=p^{-1}$)
\eq\label{eq:diffent0}
\begin{split}
\Ent_0(\rho_0^-) &= \Ent_0(\rho_1^-) + n\log \frac{1}{n} - n\cost(\rho_1^-, \rho^-_0) \\
&-\int \log \det\left( \wt{L}({p^{-1}})\right) d\rho_1^-(p) + 2 \int \log \nu_0\left( p^{-1}\right)d\rho^-_1(p).
\end{split}
\en
 
We now revert $\rho_0^-, \rho_1^-$ to $\rho_0, \rho_1$ by replacing the variables. First, recall that $c(p,q)=c(q^{-1}, p^{-1})$. Hence $\cost(\rho_1^-, \rho_0^-)=\cost(\rho_0, \rho_1)$. This is expected since inversion is the multiplicative analog of sign change in the quadratic Wasserstein problem. Next, recall that all integrations are being performed with respect to the coordinate system $\tp=(p_1, \ldots, p_{n-1})$.  

The Jacobian of the transformation $\wt{p} \mapsto \wt{r}: \widetilde{p^{-1}}$ has been computed in \cite[eqn. (72)]{PW18}:
\eq\label{eq:PW1872}
\abs{\frac{\partial(r_1, \ldots, r_{n-1})}{\partial(p_1,\ldots, p_{n-1} )}}= \frac{r_1 \cdots r_{n}}{p_1 \cdots p_n}= \frac{\nu_0(p)}{\nu_0(r)}.
\en
Hence, by the change of variable formula 
\[
\begin{split}
\Ent_0(\rho_0^-) &= \Ent(\rho_0^-) - \int \log \nu_0(p) d\rho^-_0(p)\\
&= \Ent(\rho_0) - \int \log \abs{\frac{\partial(r_1, \ldots, r_{n-1})}{\partial(p_1,\ldots, p_{n-1} )}} d\rho_0(p) - \int \log \nu_0(p^{-1}) d\rho_0(p) \\
&= \Ent(\rho_0) - \rho_0\left( \log \nu_0(p)\right)= \Ent_0(\rho_0).
\end{split}
\]
Similarly $\Ent_0(\rho_1^-)=\Ent_0(\rho_1)$. Again, these relationships are expected since inversions with respect to $\odot$ are multiplicative analogs of sign-changes and $\nu_0$ is the multiplicative analog of the Lebesgue measure. 

Finally, by a change of variable,
\[
\begin{split}
-\int \log \det\left( \wt{L}({p^{-1}}) \right) d\rho_1^-(p) &=- \int \log \det\left( \wt{L}(q) \right) d\rho_1(q)=-\rho_0\left( \log \abs{J}(p^*)\right),\quad \text{and} \\
2\int \log \nu_0(p^{-1})d\rho_1^-(p) &= 2\rho_1(\log\nu_0(q)). 
\end{split}
\]
Hence, if we translate all the terms from \eqref{eq:diffent0}, we get 
\[
\Ent_0(\rho_0) = \Ent_0(\rho_1) + n \log\frac{1}{n} - n \cost(\rho_0, \rho_1) -\rho_0\left( \log \abs{J}(p^*)\right)+ 2\rho_1(\log\nu_0(q)).
\]
Rearranging terms gives us
\[
\rho_0\left( \log \abs{J}(p^*)\right)= \Ent_0(\rho_1) - \Ent_0(\rho_0) - n \log n - n \cost(\rho_0, \rho_1) + 2\rho_1(\log\nu_0(q)).
\]
Hence, by part (i) of this lemma, 
\[
\begin{split}
\lim_{h\rightarrow 0+} & -\int \log Z_h(p) \rho_0(p) dp= \frac{1}{2}\rho_0\left( \log \abs{J}(p^*)\right) + \frac{n}{2} \log n \\
&= \frac{1}{2}\left( \Ent_0(\rho_1) - \Ent_0(\rho_0) \right) - \frac{n}{2} \cost(\rho_0, \rho_1) + \rho_1\left(\log\nu_0(q) \right).
\end{split}
\]
This completes the proof of the lemma.
\end{proof}

\begin{proof}[Proof of Theorem \ref{thm:mainthm2}] We now complete the proof exactly as the proof of Theorem \ref{thm:mainthmw}.
Define $\tmu_h(p,q)=\rho_0(p)\tf_h(p, q)$. Then, for any $\nu \in \Pi(\rho_0, \rho_1)$, 
\[
\begin{split}
H\left(\nu \mid \mu_h \right) &= \nu\left( \log \frac{d\tmu_h}{d\mu_h}\right) + H\left( \nu \mid \tmu_h\right)\\
&= \frac{1}{h}\nu\left( \Phi(p,q)\right) - \nu\left(\log Z_h(p) \right) - \nu\left( \log \nu_0(p^*)\right) + H\left( \nu \mid \tmu_h\right)\\
&= \frac{1}{h} \cost(\rho_0, \rho_1) - \int \log Z_h(p) \rho_0(p) dp - \rho_0\left( \log \nu_0(p^*)\right) + H\left( \nu \mid \tmu_h\right). 
\end{split}
\]
The last line above comes from \eqref{eq:whatisphipq}, in particular, from the facts that
\[
\begin{split}
\nu\left( c(p,p^*)\right)&=\rho_0\left( c(p,p^*)\right)= \cost(\rho_0, \rho_1).\\
\rho_0\left( \frac{1}{n} \sum_{i=1}^n \log p_i^*\right)&= \rho_1\left( \frac{1}{n} \sum_{i=1}^n \log q_i \right), \quad
\rho_0\left(\varphi(p^*) \right) = \rho_1\left( \varphi(q)\right).
\end{split}
\]

As in proof of Theorem \ref{thm:mainthmw}, as $h\rightarrow 0+$, we get 
\[
\begin{split}
K_h(\rho_0, \rho_1 ) &= \frac{1}{h} \cost(\rho_0, \rho_1) - \int \log Z_h(p) \rho_0(p) dp - \rho_0\left( \log \nu_0(p^*)\right) + o(1) \\
&= \frac{1}{h} \cost(\rho_0, \rho_1) - \int \log Z_h(p) \rho_0(p) dp - \rho_1\left( \log \nu_0(q)\right) + o(1) \\
&= \left(\frac{1}{h} - \frac{n}{2} \right) \cost(\rho_0, \rho_1) + \frac{1}{2}\left( \Ent_0(\rho_1) - \Ent_0(\rho_0) \right) + o(1),
\end{split}
\]
from Lemma \ref{lem:exptilting2} (ii). This gives us Theorem \ref{thm:mainthm2}. 
\end{proof}

\bibliographystyle{abbrv}

\bibliography{infogeo.new}

\end{document}